\newcommand{\N}{\mathbb{N}}
\newtheorem{theorem}{Theorem}
\newtheorem{conjecture}[theorem]{Conjecture}
\newtheorem{definition}[theorem]{Definition}
\newtheorem{lemma}[theorem]{Lemma}
\newtheorem{proposition}[theorem]{Proposition}
\newtheorem{remark}[theorem]{Remark}
\title{THE PLANAR TURÁN NUMBER OF $\Theta_6$-GRAPHS} % The short title appears at the bottom of every slide, the full title is only on the title page
\author{David Guan, Ervin Gy\H{o}ri, Diep Luong-Le, Felicia Wang, Mengyuan Yang} % Your name
\date{\today}
\begin{document}
\maketitle

\begin{abstract}
   There are two particular $\Theta_6$-graphs - the 6-cycle graphs with a diagonal. We find the planar Tur\'an number of each of them, i.e. the maximum number of edges in a planar graph $G$ of $n$ vertices not containing the given  $\Theta_6$ as a subgraph and we find infinitely many extremal constructions showing the sharpness of these results - apart from a small additive constant error in one of the cases. 
   
   %The proofs are mostly new applications or variants of the “contribution method” introduced by Ghosh, Gy\H ori, Martin, Paulos and Xiao.
\end{abstract}

\section{Introduction and Main Results}
All graphs considered in this paper are finite, undirected, and simple plane graph. We denote the vertex, edge, and face sets of a graph $G$ by $V(G)$, $E(G)$, and $F(G)$, respectively.
Moreover, we denote the edge set of some $f\in F(G)$ by $E(f)$.

A classical problem in extremal graph theory is to determine the Tur\'an number $ex(n,H)$, which is the maximum number of edges in a graph of $n$ vertices that does not contain $H$ as a subgraph. In 2016, Dowden initiated the study of planar Tur\'an numbers, $ex_{\mathcal{P}}(n,H)$, which is the maximum number of edges in a planar graph of $n$ vertices not containing $H$ as a subgraph. 
Let $\Theta_k$ denote the family of Theta graphs on $k \ge 4$ vertices, that is, graphs obtained by joining a pair of non-consecutive vertices of a cycle $C_k$ with an edge.
Dowden  and other authors  (\cite{Dowden}, \cite{theta4}, \cite{c6}, \cite{H-free}, \cite{thetafree}) determined the planar Tur\'an numbers of $C_4$, $C_5$, $C_6$ $\Theta_4$, $\Theta_5$, $\Theta_6$ and more.

In \cite{theta6}, Gy\H ori et. al. proved the following theorem for the planar Tur\'an number of $\Theta_6$.
\begin{theorem} $ex_{\mathcal{P}}(n,\Theta_6)\leq \frac{18}{7}n-\frac{48}{7}$ for all $n\geq 14$.
\end{theorem}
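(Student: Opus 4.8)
The plan is to take a $\Theta_6$-free plane graph $G$ on $n\ge 14$ vertices with the maximum possible number of edges and to bound $|E(G)|$ by a discharging argument driven by Euler's formula. As a preliminary reduction I would pass to the $2$-connected case: if $G$ has a cut vertex it splits into blocks glued along vertices, and since the claimed bound is super-additive under such gluings — $(\tfrac{18}{7}n_1-\tfrac{48}{7})+(\tfrac{18}{7}n_2-\tfrac{48}{7})=\tfrac{18}{7}(n_1+n_2-1)-\tfrac{78}{7}$ — an induction over the block tree reduces everything to $2$-connected pieces, with the finitely many small dense exceptions (such as $K_4$) handled separately so that they cannot break the inequality once $n\ge 14$. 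So assume $G$ is $2$-connected; then each face boundary is a cycle. Writing $f_i$ for the number of faces of length $i$ and using Euler's formula $|F|=|E|-n+2$ together with $\sum_f|E(f)|=2|E|$, one gets
\[
\sum_{f\in F(G)}\bigl(|E(f)|-4\bigr)=2|E|-4|F|=4n-8-2|E|,
\]
so the theorem is \emph{equivalent} to the inequality $\sum_{f}\bigl(|E(f)|-4\bigr)\ \ge\ -\tfrac{8}{7}(n-5)$. I would therefore assign each face $f$ the initial charge $\mu(f)=|E(f)|-4$, so triangular faces carry $-1$, quadrilateral faces carry $0$, and each face of length $\ell\ge 5$ carries a surplus $\ell-4\ge 1$; it remains to redistribute charge (and to borrow a small fixed amount per vertex to account for the $-\tfrac{8}{7}(n-5)$ term) so that what is left over is nonnegative.

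The engine of the argument is a list of local structural facts about $\Theta_6$-free plane graphs that forbid dense clusters of short faces. Using that a $6$-cycle with a chord at distance $2$ or $3$ is exactly a $\Theta_6$, I would prove, among others: (i) no two quadrilateral faces share an edge (otherwise the outer boundary of their union is a $C_6$ with the shared edge as a chord at distance $3$); (ii) a quadrilateral face has at most one triangular neighbour (a triangle on a second edge of the quadrilateral produces a $C_6$ with a chord at distance $2$); (iii) a vertex is incident with at most three consecutive triangular faces (four consecutive ones give a $C_6$ through that vertex with a chord at distance $3$); and (iv) a pair of triangular faces sharing an edge cannot be extended by triangular faces on too many of the surrounding edges without creating a $C_6$ with a chord. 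Together these say that every maximal ``block'' of incident short faces ($3$- and $4$-faces) has bounded size and is ring-fenced by faces of length $\ge 5$, each of which contributes a fixed positive surplus that can be spent on the short faces it borders. Pinning down the exact list of admissible local configurations of $3$- and $4$-faces is the most laborious part.

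With the structure in hand I would run a discharging scheme of the following shape: give every vertex a small fixed charge which it passes to its incident triangular faces; have every face of length $\ell\ge 5$ send a fixed quantum (calibrated so that the arithmetic closes at the constant $\tfrac{18}{7}$) across each incident edge to the short face on the other side; and let runs of triangular faces around a vertex rebalance among themselves according to the admissible patterns of (iii). One then verifies, configuration by configuration, that after discharging every face has charge $\ge 0$, whence $\sum_f(|E(f)|-4)\ge -\tfrac{8}{7}(n-5)$ and the bound follows. The principal obstacle — and where the real work lies — is twofold: first, completing the case analysis of the previous paragraph so that \emph{every} local picture of short faces in a $\Theta_6$-free plane graph is understood; and second, choosing a single consistent set of charge quanta that is simultaneously large enough to wipe out every triangular deficit and small enough to be affordable by the length-$\ge 5$ faces bordering the clusters, with no residual slack that would spoil the coefficient $\tfrac{18}{7}$. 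It is precisely to rule out the small sporadic configurations that obstruct this balancing that one imposes the hypothesis $n\ge 14$.
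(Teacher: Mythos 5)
Your overall strategy (redistributing the Euler-formula charge $|E(f)|-4$ over faces, powered by local structure of $\Theta_6$-free plane graphs) is the right genre of argument, and your reformulation $\sum_f\bigl(|E(f)|-4\bigr)\ge -\tfrac{8}{7}(n-5)$ is arithmetically correct. But the proposal has genuine gaps at each of its three stages. First, the block-tree reduction is not as innocuous as you suggest: a block on $n_1\le 5$ vertices is automatically $\Theta_6$-free and can be a triangulation with $3n_1-6$ edges (nine edges on five vertices, against the bound's value of six), so the induction hypothesis fails on small blocks; a graph may contain an unbounded number of such dense blocks, and the required amortization against slack elsewhere is not carried out. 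Second, several of your structural lemmas are false as stated because they implicitly assume the relevant vertices are distinct: two $4$-faces sharing an edge need not produce a $C_6$ plus chord when their union has fewer than six vertices (they may share two edges of a common path), and claim (iii) is refuted outright by the wheel $W_4$ --- a vertex of degree four all of whose incident faces are triangles --- which is $\Theta_6$-free and is precisely the configuration $B_{5,b}$ in the paper's catalogue. Third, and most importantly, the discharging rules are never specified: the entire content of the theorem is the existence of one consistent set of quanta that repairs every deficient configuration while remaining affordable by the long faces, and asserting that such a calibration ``closes at $\tfrac{18}{7}$'' is asserting the theorem rather than proving it.

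For context, the present paper does not reprove this statement (it is quoted from the cited $\Theta_6$ paper), but both that proof and everything in this paper use the triangular block contribution method rather than face discharging: the edge set is partitioned into maximal edge-connected unions of $3$-faces, each block $B$ receives $e_B=|E(B)|$ and the fractional face count $f_B=\sum_f |E(f)\cap E(B)|/|E(f)|$, and one verifies a single linear inequality per block or per small cluster. That organization supplies exactly what your sketch is missing: the finite catalogue of admissible blocks (Proposition \ref{8blocks}, Lemma \ref{poss_tri_block}) replaces your open-ended case analysis of short-face clusters, the fractional face-sharing replaces the choice of discharging quanta, and no $2$-connectivity reduction is needed because the block partition is defined for arbitrary plane graphs. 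If you pursue your route, the first concrete step is to enumerate all maximal triangle-clusters in a $\Theta_6$-free plane graph and confirm that your list coincides with $B_3$, $B_{4,a}$, $B_{4,b}$, $B_{5,a}$, $B_{5,b}$, $B_{5,c}$, $B_{5,d}$; at that point you will essentially have reconstructed the paper's method.
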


\noindent They used the triangular block contribution method to estimate this planar Tur\'an number.
They conjectured the planar Tur\'an numbers for the particular $\Theta_6$ graphs.
\begin{conjecture}(\cite{theta6})\hfill
\begin{enumerate}
    \item $ex_{\mathcal{P}}(n,\Theta^1_6)=\frac{45}{17}n+O(1)$.
    \item $ex_{\mathcal{P}}(n,\Theta^2_6)=\frac{18}{7}n+O(1)$.
\end{enumerate}
\end{conjecture}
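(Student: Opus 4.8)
The goal is to prove both conjectured equalities. For each $i\in\{1,2\}$ this splits into a lower bound given by an explicit construction and a matching (up to an $O(1)$ additive term) upper bound proved by the triangular‑block/discharging method behind Theorem~1. Recall that $\Theta^1_6$ and $\Theta^2_6$ are the two ways to add a chord to $C_6$: the chord either creates a triangle and a pentagon, or splits $C_6$ into two $4$‑cycles; both graphs are $2$‑connected, so no copy of $\Theta^i_6$ can use a cut vertex — a fact that makes the constructions local and the reduction to $2$‑connected hosts painless.

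\textbf{Lower bounds.} For each $i$ I would realize the extremal graphs as long truncations of a fixed singly‑periodic planar graph: a periodic planar graph whose faces are all triangles except for a periodic family of larger faces, placed precisely so as to destroy every potential copy of $\Theta^i_6$, and whose fundamental domain contributes $17$ vertices and $45$ edges in the $\Theta^1_6$ case (conjectured ratio $\frac{45}{17}$) and $7$ vertices and $18$ edges in the $\Theta^2_6$ case (ratio $\frac{18}{7}$). Since $\Theta^i_6$ has six vertices and bounded diameter, $\Theta^i_6$‑freeness of a long truncation reduces to inspecting a window of a bounded number of consecutive periods; a $k$‑period truncation then has $n=17k+O(1)$, $e=45k+O(1)$ (resp. $n=7k+O(1)$, $e=18k+O(1)$), giving the claimed ratios. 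The ``small additive error'' flagged in the abstract is just the gap between the edge surplus such a periodic gadget carries and the additive constant the upper bound produces.

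\textbf{Upper bounds.} Let $G$ be an edge‑maximal $\Theta^i_6$‑free plane graph on $n$ vertices; we may assume $G$ is $2$‑connected (a $\Theta^i_6$ lies inside a single block, and single‑edge or triangle blocks only lower the edge density). From $\sum_{f\in F(G)}|E(f)|=2e$ and Euler's formula one gets $\sum_{f\in F(G)}(|E(f)|-3)=3n-6-e$, so it suffices to bound the non‑triangular excess from below: $\sum_{f}(|E(f)|-3)\ \ge\ \frac{6}{17}n-O(1)$ for $i=1$ and $\ \ge\ \frac{3}{7}n-O(1)$ for $i=2$. The core is a structural classification of the triangular blocks of $G$ — the maximal subgraphs in which every edge lies in a triangle, together with the lone non‑triangle edges. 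One proves that $\Theta^i_6$‑freeness prevents a triangular block from containing a long fan of triangles or a wheel $W_{\ge 5}$, and — depending on which $\Theta^i_6$ is forbidden — either two $C_4$'s sharing exactly one edge with disjoint remainders, or a triangle together with an internally disjoint $P_5$ joining two of its vertices. Chasing these forbidden short cycles through a block pins it down to one of a short explicit list of small graphs, each of bounded order and size, hence of bounded local edge density.

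\textbf{Discharging and the main obstacle.} Assign each face $f$ the charge $|E(f)|-3\ge 0$, so that only triangles start at zero, and redistribute so that each triangular block is paid by the non‑triangular faces bounding its outer boundary, at a rate dictated by the block's type and by how it can attach to the rest of $G$ (which vertices and edges it shares, which faces of $G$ its boundary bounds). Showing that every block receives charge proportional to its number of vertices forces the total non‑triangular excess to be at least the stated linear amount, up to a bounded loss near the outer face; the contrapositive yields $ex_{\mathcal{P}}(n,\Theta^1_6)\le\frac{45}{17}n+O(1)$ and $ex_{\mathcal{P}}(n,\Theta^2_6)\le\frac{18}{7}n+O(1)$, which together with the constructions gives both equalities. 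I expect the hardest step to be exactly this: proving that no large triangular agglomeration is $\Theta^i_6$‑free is a finite but delicate case analysis, and then the discharging weights must be tuned to extract the precise constants $\frac{6}{17}$ and $\frac{3}{7}$ rather than weaker ones — the denominator $17$ in particular indicates that distinct admissible block types contribute at genuinely different rates, so a uniform rule will not do. Closing the additive‑constant gap in the $\Theta^1_6$ case is where I would expect to come up just short, consistent with the statement.
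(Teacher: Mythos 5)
Your outline follows the same strategy the paper actually uses: decompose the plane graph into triangular blocks (built from $3$-faces), classify the $\Theta_6^i$-free block types into a finite list, distribute edges and faces to blocks so that the target bound reduces to a per-block inequality via Euler's formula, and exhibit periodic constructions with the right vertex/edge ratio. The arithmetic reduction you state, $\sum_f(|E(f)|-3)=3n-6-e$, is exactly the $g(e_B,f_B)=(\beta-\alpha)e_B+\alpha f_B\le 0$ bookkeeping in Lemma \ref{tri-b} in disguise. However, there is a genuine gap at the step you describe as ``showing that every block receives charge proportional to its number of vertices'': this is false block-by-block. The block $B_{5,c}$ (a $4$-cycle with one diagonal and a vertex of degree $3$ inside one of the resulting triangles; $5$ vertices, $8$ edges, four interior $3$-faces) can have all four boundary edges adjacent to $4$-faces, in which case its face contribution is $4+\tfrac14\cdot4=5$ and its charge is $45\cdot5-28\cdot8=+1>0$ for $\Theta_6^1$ (and $+2$ for $\Theta_6^2$). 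No uniform per-block rule closes the argument. The paper handles this by proving that in that situation the adjacent $4$-faces are forced into a unique configuration, that the new edges form trivial blocks, and by merging $B_{5,c}$ with those four $B_2$'s into a cluster whose total charge is negative --- and then separately verifying that distinct clusters are disjoint so the accounting remains a partition. Your proposal contains no mechanism for this non-local repair, and without it the upper bound does not follow.

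Two smaller issues. First, your lower bounds are asserted rather than constructed: you posit a periodic gadget with fundamental domain $(17,45)$ or $(7,18)$ and claim $\Theta_6^i$-freeness can be checked in a bounded window, but the actual constructions (concentric pentagonal rings with each $3$-face replaced by $B_{5,a}$ for $\Theta_6^1$; the $\Theta_6$-free family of \cite{theta6} for $\Theta_6^2$) are the content of that half of the theorem and cannot be waved at. Second, defining triangular blocks as maximal subgraphs in which every edge lies in a triangle is not the paper's definition (which uses $3$-\emph{faces} of the embedding); with your definition the blocks need not partition the edge set compatibly with the face-distribution identity $\sum_B f_B=f(G)$, since an edge can lie on a triangle that is not a face.
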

\begin{center}
\begin{tabular}{c c}
\begin{tikzpicture}[scale=0.9]
\filldraw (0,1.5) circle (0.07);
\filldraw (0,-1.5) circle (0.07);
\filldraw (-1.3,0.75) circle (0.07);
\filldraw (1.3,0.75) circle (0.07);
\filldraw (-1.3,-0.75) circle (0.07);
\filldraw (1.3,-0.75) circle (0.07);
\draw[thick] (0,1.5) -- (-1.3,0.75);
\draw[thick] (-1.3,-0.75) -- (-1.3,0.75);
\draw[thick] (-1.3,-0.75) -- (0,-1.5);
\draw[thick] (0,1.5) -- (1.3,0.75);
\draw[thick] (1.3,-0.75) -- (1.3,0.75);
\draw[thick] (1.3,-0.75) -- (0,-1.5);
\draw[thick] (-1.3,0.75) -- (1.3,-0.75);
\end{tikzpicture}
\hspace{1cm}
& 
\begin{tikzpicture}[scale=0.9]
\filldraw (0,1.5) circle (0.07);
\filldraw (0,-1.5) circle (0.07);
\filldraw (-1.3,0.75) circle (0.07);
\filldraw (1.3,0.75) circle (0.07);
\filldraw (-1.3,-0.75) circle (0.07);
\filldraw (1.3,-0.75) circle (0.07);
\draw[thick] (0,1.5) -- (-1.3,0.75);
\draw[thick] (-1.3,-0.75) -- (-1.3,0.75);
\draw[thick] (-1.3,-0.75) -- (0,-1.5);
\draw[thick] (0,1.5) -- (1.3,0.75);
\draw[thick] (1.3,-0.75) -- (1.3,0.75);
\draw[thick] (1.3,-0.75) -- (0,-1.5);
\draw[thick] (-1.3,0.75) -- (1.3,0.75);
\end{tikzpicture}\\
$\Theta_6^1$ \hspace{0.9cm}
& $\Theta_6^2$  
\end{tabular}
\end{center}
In this paper, we use that technique and obtain the following theorems.

\begin{theorem} \label{theta6-1}
    $ex_{\mathcal{P}}(n,\Theta^1_6)\leq\frac{45}{17}(n-2)$ for all $n\geq 6$, and the equality holds for infinitely many $n$.
\end{theorem}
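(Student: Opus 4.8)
The plan is to follow the triangular block contribution method used by Győri et al.\ in the quoted results, writing $n$ and $e$ for the numbers of vertices and edges of $G$. Standard reductions let me assume $G$ is $2$-connected with minimum degree at least $3$: removing a vertex of degree $\le 2$ or decomposing at a cut vertex reduces to strictly smaller $\Theta_6^1$-free planar graphs, and combining the inductive hypothesis with the crude bound $e\le 3n-6$ on the pieces settles every such case, as do the finitely many small cases $n\le 6$. Now fix a plane embedding; since $G$ is $2$-connected every face is bounded by a cycle, so $\sum_{f\in F(G)}|E(f)|=2e$, and by Euler's formula the bound $e\le\frac{45}{17}(n-2)$ is equivalent to
\[
  \sum_{f\in F(G)}\bigl(14\,|E(f)|-45\bigr)\ \ge\ 0 .
\]
Thus each triangular face carries a deficit of $3$, each quadrilateral a surplus of $11$, and each $k$-gon a surplus of $14k-45$, and the task is to pay off the triangular faces out of the surplus of the larger faces.

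The structural core is an analysis of the triangular blocks of $G$, the maximal subgraphs obtained from the transitive closure of the relation ``share an edge'' on triangles, together with the edges lying in no triangle; since these partition $E(G)$ we have $e=\sum_B e(B)$. Using $\Theta_6^1$-freeness I would establish: (i) a classification of the triangular blocks --- each is either ``dense but bounded'' ($K_4$, $K_5-e$, and a few relatives) or ``thin'' (books $B_k$ of triangles on a common spine, perhaps with small pieces attached), the reason being that a genuinely two-dimensional triangulated patch, or a triangulated strip with at least three columns, always contains an edge $uv$ together with two internally disjoint $uv$-paths of length $3$, hence a $\Theta_6^1$; (ii) per-block edge bounds, roughly $e(B)\le 2\,v(B)-2$ with $K_5-e$ the slightly denser exception and $e(B)=v(B)-1$ for the single-edge blocks; and (iii) constraints on how the blocks overlap (they meet in at most two vertices, in limited ways) and on the faces incident to them --- for example an edge cannot be flanked by two vertex-disjoint quadrilateral faces, since their two length-$3$ sides and the shared edge would form a $\Theta_6^1$. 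Together these force a definite positive density of non-triangular faces between the dense blocks.

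Combining the block data with the face charges, I would run a discharging argument: each face keeps its charge $14\,|E(f)|-45$ and each non-triangular face ships its surplus to the triangular faces of the neighbouring triangular blocks, with amounts prescribed by (i)--(iii) so that a book together with its large outer face, and a dense small block together with the quadrilaterals that (iii) forces around it, each end with nonnegative total charge, while single-edge and small exceptional blocks are absorbed by the remaining slack. Verifying nonnegativity face by face, in each configuration permitted by the structural lemmas, yields the displayed inequality and hence the theorem. I expect the main obstacle to be precisely this bookkeeping: the classification in (i) must be genuinely exhaustive, with no $\Theta_6^1$-free triangular block left unaccounted for, and the discharging constants must be calibrated so that the densest legal local pattern --- a maximal cluster of $K_5-e$'s (or similar dense blocks) separated by as few quadrilateral faces as possible --- breaks exactly even, which is what pins the leading constant to $\tfrac{45}{17}$ rather than anything larger.

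For the equality statement I would exhibit an explicit infinite family. Since $\tfrac{45}{17}(n-2)$ must be an integer we need $n\equiv 2\pmod{17}$, and the construction is a periodic plane graph: around a cycle one repeats $k$ times a fixed gadget contributing $17$ new vertices and $45$ new edges per period (equivalently $22$ triangular and $6$ quadrilateral faces per period), the gadget being exactly the densest local pattern identified by the discharging analysis --- dense small blocks joined up and spaced out by quadrilaterals. Planarity and the edge count follow from the construction and Euler's formula, and $\Theta_6^1$-freeness is verified by checking that every $6$-cycle of the construction is chordless or has only a short chord.
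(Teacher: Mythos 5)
Your overall framework---partition $E(G)$ into triangular blocks, rewrite the target bound via Euler's formula as $\sum_{f}(14|E(f)|-45)\ge 0$, and balance the deficit $-3$ of each triangular face against the surplus of the larger faces block by block---is exactly the paper's contribution method in discharging language, and your identification of $K_5-e$ (the paper's $B_{5,a}$) as the critical dense block is correct. But the calibration of the extremal local pattern, which you yourself flag as the crux, is wrong in a way that breaks the argument. You assert that the tight configuration is ``dense small blocks separated by as few quadrilateral faces as possible'' and that this ``breaks exactly even.'' It does not: a $K_5-e$ block carries five triangles (deficit $15$) and three boundary edges, and if each boundary edge borders a $4$-face it receives at most $3\cdot\frac{11}{4}=\frac{33}{4}<15$, so the configuration would be \emph{negative}, i.e.\ denser than $\frac{45}{17}(n-2)$; calibrated to quadrilaterals the method only yields the weaker bound $e\le\frac{36}{13}(n-2)$. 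The entire content of the paper's proof at this point is the lemma you are missing: $\Theta_6^1$-freeness \emph{forbids} a $4$-face on the outside of a boundary edge of $B_{4,a}$, $B_{5,a}$, $B_{5,b}$ (a $4$-face $x_1x_2y_1y_2$ glued to the outer edge $x_1x_2$ together with the internal $4$-path $x_1x_3x_4x_2$ forms a $6$-cycle with a long chord $x_1x_2$, i.e.\ a $\Theta_6^1$). Hence the faces flanking $B_{5,a}$ must have length at least $5$, giving $5\cdot\frac{25}{5}\cdot\frac{3}{5}=15$ per block and pinning the constant to $\frac{45}{17}$. Consequently your proposed extremal gadget ($22$ triangles and $6$ quadrilaterals per period) is not $\Theta_6^1$-free; the paper's construction instead takes Dowden's $C_4$-free pentagonal skeletons and inflates each triangular face to a $B_{5,a}$, so that every block is a $K_5-e$ surrounded by $5$-faces.

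Two further gaps. First, your block classification is too loose: with the paper's (face-based) definition the exhaustive list for $\Theta_6^1$-free graphs is the nine blocks on at most $5$ vertices plus one $6$-vertex strip $B_6$; ``books of triangles on a common spine'' with three or more pages cannot occur as plane triangular blocks, and the exceptional block $B_6$ does not fit either of your two categories, so ``genuinely exhaustive'' needs actual verification. Second, there is one block ($B_{5,c}$, the $4$-wheel with one subdivided sector) whose contribution is \emph{not} nonpositive under the naive face estimate when all four of its boundary edges lie on $4$-faces; the paper handles it by showing the adjacent $4$-faces are then forced into a unique position, grouping $B_{5,c}$ with the four trivial edge-blocks around it into a cluster, and checking the cluster's total contribution. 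Your remark that small blocks are ``absorbed by the remaining slack'' gestures at this but does not supply the required case analysis, and without it the block-by-block inequality is simply false for that block.
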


\begin{theorem} \label{theta6-2}
    $ex_{\mathcal{P}}(n,\Theta^2_6)\leq\frac{18}{7}(n-2)$ for all $n\geq 6$, and $ex_{\mathcal{P}}(n,\Theta^2_6)\geq\frac{18}{7}n-\frac{48}{7}$ for infinitely many $n$.
\end{theorem}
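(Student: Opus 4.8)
The plan is to prove the two bounds separately: the upper bound $ex_{\mathcal{P}}(n,\Theta_6^2)\le\frac{18}{7}(n-2)$ by the triangular block contribution method (Euler's formula together with a discharging argument), and the lower bound $ex_{\mathcal{P}}(n,\Theta_6^2)\ge\frac{18}{7}n-\frac{48}{7}$ by exhibiting an explicit periodic family of $\Theta_6^2$-free plane graphs.

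For the upper bound, let $G$ be an edge-maximal $\Theta_6^2$-free plane graph on $n\ge 6$ vertices with $e$ edges and $f$ faces. Edge-maximality makes $G$ connected, and by the usual reduction along the block-tree of $G$ — using that every $2$-connected $\Theta_6^2$-free graph on at most $5$ vertices has few enough edges — I may assume $G$ is $2$-connected, so that $n-e+f=2$ and every edge borders exactly two faces. Euler's formula then gives the identity $7e-18(n-2)=\tfrac12\sum_{F\in F(G)}\bigl(36-11\deg F\bigr)$, so it suffices to prove $\sum_{F\in F(G)}\mu(F)\ge 0$, where $\mu(F):=11\deg F-36$; thus triangular faces carry charge $-3$, quadrilaterals $+8$, pentagons $+19$, hexagons $+30$, and larger faces more. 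I would then partition the triangular faces of $G$ into \emph{triangular blocks}, the maximal edge-connected unions of triangular faces (equivalently, the $2$-connected blocks of the subgraph formed by all edges that lie on a triangular face). The structural core is the lemma that \emph{in a $\Theta_6^2$-free plane graph every triangular block is one of $K_3$, $K_4-e$, $K_4$, the fan $W_4^{-}$ (a hub joined to a path on $4$ vertices), the wheel $W_4$, $K_4$ with one pendant triangle, or $K_5-e$}, all on at most $5$ vertices. To prove this I would show that a triangular block $B$ on $\ge 6$ vertices, together with at most one incident face of $G$, already contains a $\Theta_{1,2,4}=\Theta_6^2$: one always finds an edge $uv$ of $B$ on a triangle $uvw$ of $B$ together with a $(u,v)$-path of length $4$ avoiding $w$, obtained by walking around the outer boundary of the triangulated patch, and this is a finite case analysis over the ways a triangulated patch can grow beyond $5$ vertices.

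Next I would establish the companion \emph{adjacency lemmas}: the high-density blocks $K_4$, $W_4$, $K_4$-with-pendant-triangle, and $K_5-e$ cannot share a boundary edge with a face of degree $\le 5$ (each such configuration contains $\Theta_6^2$ — e.g.\ a pentagon sharing a rim edge of a $W_4$ gives $\Theta_{1,2,4}$ at once), a pentagonal face cannot be incident to any triangular block, and a quadrilateral cannot be incident to too many triangular blocks. The discharging rule is then: every non-triangular face sends charge across its block-boundary edges to the incident triangular blocks, at a rate so that a block $B$ with $t_B$ triangular faces and boundary length $\ell_B$ receives $3t_B/\ell_B$ per boundary edge — which totals its deficit $3t_B$. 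One verifies no face becomes negative: the low-density blocks $K_3$, $K_4-e$, $W_4^{-}$ demand at most $2$ per boundary edge and are absorbed by their incident (necessarily small) faces; the high-density blocks are fed by hexagons or larger faces that can spare $\ge 5$ per edge; and the case of $K_5-e$ surrounded by hexagons is exactly tight. Summing the charges yields $\sum_F\mu(F)\ge 0$, hence $f\le\frac{11}{18}e$ and $e\le\frac{18}{7}(n-2)$. I expect the main obstacle to be exactly this last verification: the medium-density blocks $W_4$ and $K_4$-with-pendant-triangle demand $3$ per boundary edge, more than one quadrilateral can supply, so one must forbid quadrilaterals adjacent to these blocks and, more delicately, bound how tightly several triangular blocks can crowd around a single vertex or face — this is where essentially all of the work lies, and it is also where the configurations that are $\Theta_6^2$-free but not $\Theta_6$-free must be handled.

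For the lower bound, observe that $K_5-e$ uniquely maximises the edge-to-vertex ratio among the allowed blocks: a plane graph in which every triangular block is a copy of $K_5-e$ and all remaining faces are hexagons has $\tfrac{E}{V}\to\tfrac{6\cdot 9}{3\cdot 5+9-3}=\tfrac{18}{7}$. The plan is to take a periodic plane graph obtained by gluing copies of $K_5-e$ around hexagonal faces (equivalently, to recall the extremal construction for $\Theta_6$ from \cite{theta6}, which is $\Theta_6$-free and hence $\Theta_6^2$-free); $\Theta_6^2$-freeness is verified by checking bounded-radius neighbourhoods, since $|V(\Theta_6^2)|=6$ and every edge lies inside a copy of $K_5-e$ or a hexagon. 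Finite patches of this tiling realise infinitely many $n$, and a direct vertex/edge/face count via Euler's formula gives exactly $\frac{18}{7}n-\frac{48}{7}$ edges, the additive deficit $\tfrac{12}{7}$ from $\frac{18}{7}(n-2)$ arising from the single unbounded face of the patch. This residual gap between the upper and lower bounds is the "small additive constant error" mentioned in the abstract; removing it would require either a denser $\Theta_6^2$-free construction or a tighter discharging, and the present method supplies neither.
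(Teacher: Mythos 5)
Your overall architecture is the same as the paper's: decompose the graph into maximal edge-connected unions of triangular faces, prove that in a $\Theta_6^2$-free plane graph every such block is one of the seven graphs you list (this is exactly the paper's Proposition \ref{8blocks} and Lemma \ref{poss_tri_block}), and then do a weighted count. Your face-charge $\mu(F)=11\deg F-36$ is just the dual bookkeeping of the paper's block contribution $g(B)=18f_B-11e_B$, and your lower-bound construction ($K_5-e$ blocks in a hexagonal pattern, i.e.\ the $\Theta_6$-free construction of \cite{theta6}) is precisely what the paper cites. So the route is essentially the same, not a genuinely different one.

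There is, however, a concrete gap in your discharging plan, and it sits exactly where you predicted the work would be. First, a smaller error: your adjacency lemma asserting that $K_4$ cannot share a boundary edge with a face of degree at most $5$ is false in the $\Theta_6^2$-free setting. A $4$-face can share \emph{two} boundary edges of a $K_4$ (one new vertex $y$ with face $x_1x_2yx_3$); this configuration has only five vertices, so it contains no $\Theta_6$ at all, and it is in fact the tight case that forces $g(B_{4,a})=0$ in the paper's table. The serious problem is the block you call ``$K_4$ with a pendant triangle'' ($B_{5,c}$ in the paper): each of its four boundary edges can legitimately lie on a $4$-face (the two configurations where a new vertex caps $x_1x_2,x_2x_3$ or caps $x_1x_4,x_3x_4$ are $\Theta_6^2$-free), so your proposed fix --- ``forbid quadrilaterals adjacent to these blocks'' --- is unavailable. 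When all four boundary edges lie on $4$-faces the block runs a genuine surplus ($g=+2$ in the paper's normalization, a shortfall of $4$ in yours), and no local exclusion argument removes it. The paper resolves this by (i) a lemma showing that if exactly two boundaries lie on $4$-faces then the other two lie on faces of length at least $6$, and (ii) in the fully surrounded case, merging $B_{5,c}$ with the four adjacent trivial edge-blocks into a single \emph{cluster} (after proving those four edges cannot lie on any triangle, hence are trivial blocks, and that distinct clusters are disjoint) and verifying the cluster as a whole has non-positive contribution. Some device of this kind --- transferring charge from neighbouring trivial blocks rather than from faces alone --- is indispensable, and your proposal does not supply it.
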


\section{Triangular Blocks}
In this section, we will introduce our main proof technique in a general context.
% Let $H$ be some graph and $G$ be an $n$-vertex $H$-free planar graph.
To be precise, we will divide a plane graph $G$ into pieces of the following form.

\begin{definition}\label{tri_block}
Let $G$ be a plane graph. 
Pick $e\in E(G)$.
If $e$ is not adjacent to any $3$-face of $G$, then we call the subgraph of $G$ induced by $e$ a \textbf{trivial triangular block}, denoted by $B_e$.
If $e$ is incident to some $3$-face $f_e\in F(G)$, then we find $\Omega_e\subseteq F(G)$ with the smallest $|\Omega_e |$ such that
\begin{enumerate}
    \item $f_e\in\Omega_e$;
    \item if $f'\in F(G)$ is a $3$-face adjacent to some $f''\in \Omega_e$, then $f'\in \Omega_e$.
\end{enumerate}
Let $B_e$ be the subgraph of $G$ induced by $\bigcup_{f\in\Omega_e}E(f)$. 
We call $B_e$ a \textbf{triangular block}, and every $f_e\in\Omega_e$ an \textbf{interior face} of $B_e$.
\end{definition}

\begin{remark}\label{adj}
    Every $f\in F(G)$ adjacent to $B$ that is not an interior face of $B$ cannot be a $3$-face without violating the second axiom in Definition \ref{tri_block}.
\end{remark}

% \begin{definition}
% Let $G$ be a finite planar graph and fix an edge $e\in E(G)$.
% Let $B_e$ be the connected
% subgraph of $G$ with the smallest $\left|E(B_e)\right|$ such that 
% \begin{enumerate}
% \item $e\in E(B_e)$;
% \item every $3$-face of $G$ is either contained in $B_e$ or edge-disjoint from
% $B_e$.
% \end{enumerate}
% Then let $\Omega$ be the set of $3$-faces of $G$ that are also $3$-faces of $B_e$.
% We call $(B_e, \Omega)$ the \textbf{triangular block containing $e$}.
% Moreover, we call $B_e$ the \textbf{skeleton} and every $f\in\Omega$ an \textbf{interior face} of this triangular block.
% If $B_e$ only contains a single edge $e$,
% then $(B_e,\Omega=\emptyset)$ is called a \textbf{trivial triangular
% block}. 
% We use only $B_e$, or simply $B$, to denote a triangular block when there is no ambiguity in $\Omega$.
% In general, we say $B$ is a \textbf{triangular block} if it is a triangular block containing $e$ for some edge $e \in E(G)$.
% \end{definition}

% \begin{definition}\label{recursive}
%     Let $G$ be a plane graph. For an edge $e \in E(G)$, if it’s not contained in any of the $3$-faces of $G$, then we call it a \textbf{trivial triangular block}. Otherwise, we perform the following algorithm to construct a \textbf{triangular block $B_e$ containing $e$}. \\ \\
%     $ B_e \leftarrow (V (e), e);$\\
%     \textbf{while} there exists an edge in $B_e$ such that it is in  $3$-face of $G$ which is not contained in $B_e$ \textbf{do} \\
%     \ \ \ add all the edges of such 3-faces to $B_e$; 
%     \textbf{end}\\
%     Output $B_e$.
% \end{definition}

Intuitively, we can construct a triangular block as follows:
\begin{enumerate}
    \item Add a $3$-face to $\Omega$;
    \item Find a $3$-face adjacent to some face in $\Omega$ and add it to $\Omega$;
    \item Repeat the previous step and stop when we cannot add $3$-faces anymore.
\end{enumerate}
Then the triangular block is simply induced by the boundaries of the faces in $\Omega$.
Notice that, by construction, faces in $\Omega$ are all $3$-faces and they are connected through sharing edges.
This connectedness implies that triangular blocks are well-defined, that is, our definition always produces the same block for the same $e\in E(G)$.
More importantly, triangular blocks form a nice division of $G$:
% Here are two facts about the $\Omega$'s.
% First, since faces in $\Omega$ are connected through edges, we can start with any face in $\Omega$ and obtain the same set of interior faces.
% Second, if $\Omega$ and $\Omega'$ are two distinct sets of interior faces, by the second property in Definition \ref{tri_block}, we know $f\in \Omega$ and $f'\in\Omega'$ must be edge-disjoint.
% One can use these two facts to prove the following proposition.
\begin{proposition}\label{blocksequivrelationprop}
For $e, e' \in E(G)$, we define $e\sim e'$ if they are contained in the same (possibly trivial) triangular block.
Then $\sim$ is an equivalence relation.
\end{proposition}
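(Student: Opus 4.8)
The plan is to verify the three defining properties of an equivalence relation: reflexivity, symmetry, and transitivity. Reflexivity is immediate, since every $e \in E(G)$ lies in the block $B_e$, which by Definition \ref{tri_block} is a (possibly trivial) triangular block containing $e$. Symmetry is also essentially formal: if $e$ and $e'$ both lie in a common triangular block $B$, then $B$ witnesses $e' \sim e$ just as it witnesses $e \sim e'$. The content of the proposition is transitivity, together with the implicit well-definedness claim flagged in the paragraph before the statement: that the construction produces the \emph{same} block regardless of which edge (or which starting $3$-face) one begins with.

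First I would pin down well-definedness. Fix $e$ incident to a $3$-face, and suppose $\Omega_e$ and $\Omega_e'$ are two subsets of $F(G)$ each satisfying conditions (1) and (2) of Definition \ref{tri_block} with minimum cardinality. I claim $\Omega_e = \Omega_e'$. The key observation is that the set of $3$-faces reachable from $f_e$ by a chain of $3$-faces, each adjacent (sharing an edge) to the previous one, is forced to lie in any $\Omega$ satisfying (1) and (2): induct on the length of the chain using condition (2). Conversely, this reachable set itself satisfies (1) and (2) — it contains $f_e$, and any $3$-face adjacent to a member is reachable by extending a chain by one step. Hence the reachable set is the unique minimum, so $B_e$ depends only on $e$, not on the choice of $\Omega_e$. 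Moreover, if $e'$ is any other edge of $B_e$ incident to a $3$-face, that $3$-face is an interior face of $B_e$ (it shares the edge $e'$ with $B_e$; since $e'\in E(f)$ for some $3$-face $f\in\Omega_e$, Remark \ref{adj} forces it to be interior), so the reachable set from it is the \emph{same} reachable set, giving $B_{e'} = B_e$. If instead $e'\in B_e$ but $e'$ is incident to no $3$-face, then $e'$ must be an edge of $B_e$ not on any interior face — but every edge of $B_e$ lies on some interior face by construction of $B_e$ as $\bigcup_{f\in\Omega_e}E(f)$, a contradiction unless $B_e$ is trivial, in which case $e' = e$. Thus for every $e' \in B_e$ we have $B_{e'} = B_e$.

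With well-definedness in hand, transitivity is short: suppose $e \sim e'$ and $e' \sim e''$. Then there is a triangular block $B$ containing both $e$ and $e'$; by the above, $B = B_e = B_{e'}$. Likewise there is a block $B'$ containing $e'$ and $e''$ with $B' = B_{e'} = B_{e''}$. Hence $B = B_{e'} = B'$, so $e$ and $e''$ both lie in the single block $B = B_e = B_{e''}$, giving $e \sim e''$. Symmetry having been noted above, this completes the proof.

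The main obstacle is the well-definedness step — specifically, arguing cleanly that the minimal $\Omega_e$ is exactly the ``chain-reachable'' set of $3$-faces and that this is independent of the starting edge or face. The subtlety is purely that Definition \ref{tri_block} phrases $B_e$ via a minimality condition rather than an explicit closure, so one has to show minimality pins down a unique set; the connectedness-through-shared-edges remark in the text is the crux, and once it is formalized via the chain argument, everything else is bookkeeping. I would also take care with the boundary case of trivial blocks, where $B_e$ is just the single edge $e$ and all three properties hold trivially, to make sure the equivalence classes partition $E(G)$ without overlap between trivial and non-trivial blocks (an edge on a $3$-face is never in a trivial block, by definition).
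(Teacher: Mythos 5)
Your proof is correct and takes essentially the same route as the paper, which leaves the proposition formally unproved and relies only on the preceding remark that the faces of $\Omega_e$ are pairwise connected through shared edges; your chain-reachability characterization of the minimal $\Omega_e$ is precisely the rigorous version of that remark. The transitivity and trivial-block bookkeeping that follow are exactly what the paper implicitly assumes.
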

This proposition implies that there is a unique triangular block containing each $e\in E(G)$.
Let $\mathcal{B}$ be the set of (trivial) triangular blocks of $G$.
We thus know $\mathcal{B}$ forms a partition of $E(G)$. 

% \begin{proof}
% Clearly $B_e = B_{e'}$ implies both $e \in B_{e'}$ and $e' \in B_e$. Now, suppose $e' \in B_e$. Applying the recursive definition, this implies that every element in $B_{e'}$ can be generated by $e$, thus $B_{e'} \subseteq B_e$. Also, we know that every edge $\alpha \in B_e$ is generated by some edge sharing a $3$-face with $\alpha$. So there is a finite sequence $e =: e_0, e_1, ..., e_{k-1}, e_k := e'$ such that all $e_i$'s are distinct and $e_i, e_{i+1}$ shares a $3$-face for every $i$. By applying the recursive definition backwards, one can get $e \in B_{e'}$. So $B_e \subseteq B_{e'}$ and that $B_e = B_e'$. 
% \end{proof}

% \begin{proof}
% This immediately follows from proposition \ref{blocksequivrelationprop} that two triangular blocks are either the same or edge-disjoint. 
% \end{proof}

We will next give the full list of the triangular blocks appearing in $\Theta_6$-free graphs.
An easy (but somewhat tedious) case-by-case analysis yields the following proposition and lemma.
\begin{proposition} \label{8blocks}
%Any triangular block with at most $5$ vertices is isomorphic to one of the graphs below. 
The only triangular blocks with at most $5$ vertices are as follows.
\end{proposition}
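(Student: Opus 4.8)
The plan is to enumerate all triangular blocks with at most $5$ vertices by a systematic case analysis on the number of interior $3$-faces and the way they are glued together along edges. A triangular block is, by Definition \ref{tri_block}, either a single edge (the trivial block, $2$ vertices) or the subgraph induced by a maximal edge-connected family $\Omega_e$ of $3$-faces. So I would first dispose of the trivial block, then assume $\Omega_e\neq\emptyset$ and argue by induction on $|\Omega_e|$. With one $3$-face we get $K_3$ ($3$ vertices, $3$ edges). Adding a second $3$-face sharing an edge forces a $K_4$ minus an edge (the ``diamond'', $4$ vertices, $5$ edges) — note that in a \emph{plane} graph two $3$-faces can share at most one edge, and once two faces are glued the resulting quadrilateral boundary is fixed. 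From there I would track, at each step, the current outer boundary of the partial block and ask in how many essentially different ways a new $3$-face can be attached: either along a boundary edge (adding one new vertex, or adding no new vertex if the third vertex already lies on the boundary in the correct position) or, in principle, filling in a bounded region — but planarity and the maximality/connectedness conditions sharply limit this.

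The key steps, in order, are: (i) state that a nontrivial block is determined by its set $\Omega_e$ of interior $3$-faces together with the plane embedding, and that these faces are pairwise edge-adjacent-connected; (ii) handle $|\Omega_e|=1,2$ to get $K_3$ and the diamond; (iii) for $|\Omega_e|=3$, observe that the third triangle is attached to the diamond along one of its boundary edges, yielding either $K_4$ (if the new vertex coincides with the diamond's far vertex) or a ``fan'' path of three triangles on $5$ vertices, or a triangle-strip; (iv) for $|\Omega_e|=4$ with exactly $5$ vertices, note that four triangles on $5$ vertices means the block is a maximal planar-type configuration on $5$ vertices, which is $K_5$ minus an edge (two possibilities for how the triangles sit) — and here one must check which of these actually arise as triangular blocks as opposed to being forbidden; (v) bound $|\Omega_e|$: with only $5$ vertices available, Euler's formula caps the number of triangular interior faces, so the induction terminates quickly. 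Throughout I would draw the outer boundary cycle of the current block and argue that any further $3$-face adjacent to the block must be an interior face (Remark \ref{adj}), so the enumeration is forced.

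The main obstacle is the bookkeeping in step (iv)–(v): making sure the list is \emph{complete} without being redundant. Two subtleties need care. First, when a new $3$-face is attached along a boundary edge but its apex vertex happens to already be a vertex of the block, one creates a new bounded face that may or may not be a $3$-face; if it is, it must also be included in $\Omega_e$, which can cascade and change the block. One has to verify in each branch whether such forced additions stay within the $5$-vertex budget or push past it. Second, different attachment sequences can produce the same abstract (plane) block, so I would fix a canonical way to grow the block (always along a fixed reference edge first) to avoid double-counting. Once the $5$-vertex ceiling is in play, Euler's formula $|V|-|E|+|F|=2$ applied to the block (viewed as a plane graph whose bounded faces are exactly the triangles of $\Omega_e$, plus one outer face) gives $|E| = |V| + |\Omega_e| - 1 \le 4 + |\Omega_e|$ and, combined with simplicity ($|E|\le\binom{5}{2}=10$), forces $|\Omega_e|\le 6$; checking these finitely many cases and pruning the ones that violate minimality of $|\Omega_e|$ or planarity yields exactly the claimed list. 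I expect the resulting enumeration to be the eight blocks the proposition's figure will display: the trivial edge, $K_3$, the diamond, $K_4$, and the four remaining $4$- and $5$-vertex triangle configurations.
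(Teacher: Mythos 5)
The paper offers no written proof of this proposition beyond the remark that ``an easy (but somewhat tedious) case-by-case analysis'' yields it, so your plan --- grow the block one interior $3$-face at a time, track the outer boundary, and use Euler's formula to terminate --- is in spirit exactly the intended argument, just made explicit. Steps (i)--(iii) are sound (though note that your ``triangle-strip'' of three edge-adjacent triangles on $5$ vertices is isomorphic to the fan $B_{5,d}$: all three triangles necessarily share the middle vertex, so this is not a separate case).

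There is, however, a genuine error in step (iv). If the block has exactly four interior $3$-faces and $5$ vertices, then Euler's formula (with the bounded faces being precisely those four triangles) gives $|E|=|V|+|\Omega_e|-1=8$, and the outer face has length $2\cdot 8-4\cdot 3=4$; the two resulting configurations are the wheel $B_{5,b}$ and the block $B_{5,c}$, \emph{not} $K_5$ minus an edge. The graph $K_5-e$ has $9$ edges and corresponds to \emph{five} interior triangles (the maximal planar case $|E|=3|V|-6$, which is the block $B_{5,a}$); your plan as written never reaches $|\Omega_e|=5$ and would misclassify or miss $B_{5,b}$ and $B_{5,c}$. Relatedly, the bound you state does not follow from the inequalities as displayed: from $|E|=|V|+|\Omega_e|-1$ and planarity $|E|\le 3|V|-6$ one gets $|\Omega_e|\le 2|V|-5\le 5$ directly, which is both correct and sharper than the $|E|\le\binom{5}{2}$ route. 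With these corrections --- five triangles giving $B_{5,a}$, four giving $B_{5,b}$ or $B_{5,c}$, three giving $K_4$ or $B_{5,d}$, two giving $B_{4,b}$, one giving $B_3$ --- your enumeration does close up to the eight listed blocks, but the step-(iv) claim as stated is false and would derail the case analysis if executed literally.
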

\begin{center}
    \begin{tikzpicture}
    \filldraw (-1,0) circle (0.07);
    \filldraw (1,0) circle (0.07);
    \draw[thick] (-1,0) -- (1,0);
    \node at (0,0) [label=below: $B_2$]{};
    \end{tikzpicture} \quad \quad \quad
    \begin{tikzpicture}
    \filldraw (-1,0) circle (0.07);
    \filldraw (1,0) circle (0.07);
    \filldraw (0,1.732) circle (0.07);
    \draw[thick] (-1,0) -- (1,0);
    \draw[thick] (-1,0) -- (0,1.732);
    \draw[thick] (0,1.732) -- (1,0);
    \node at (0,0) [label=below: $B_3$]{};
    \end{tikzpicture} \quad \quad \quad 
    \begin{tikzpicture}
    \filldraw (-1,0) circle (0.07);
    \filldraw (1,0) circle (0.07);
    \filldraw (0,1.732) circle (0.07);
    \filldraw (0,0.577) circle (0.07);
    \draw[thick] (-1,0) -- (1,0);
    \draw[thick] (-1,0) -- (0,1.732);
    \draw[thick] (0,1.732) -- (1,0);
    \draw[thick] (-1,0) -- (0,0.577);
    \draw[thick] (1,0) -- (0,0.577);
    \draw[thick] (0,1.732) -- (0,0.577);
    \node at (0,0) [label=below: $B_{4,a}$]{};
    \end{tikzpicture} \quad \quad \quad 
    \begin{tikzpicture}
    \filldraw (-1,1) circle (0.07);
    \filldraw (1,1) circle (0.07);
    \filldraw (1,-1) circle (0.07);
    \filldraw (-1,-1) circle (0.07);
    \draw[thick] (-1,1) -- (1,1);
    \draw[thick] (-1,1) -- (-1,-1);
    \draw[thick] (1,-1) -- (1,1);
    \draw[thick] (1,-1) -- (-1,-1);
    \draw[thick] (-1,1) -- (1,-1);
    \node at (0,-1) [label=below: $B_{4,b}$]{};
    \end{tikzpicture}
\end{center}
\begin{center}
    \begin{tikzpicture}
    \filldraw (-1.5,0) circle (0.07);
    \filldraw (1.5,0) circle (0.07);
    \filldraw (0,2.6) circle (0.07);
    \filldraw (0,0.87) circle (0.07);
    \filldraw (0,1.7) circle (0.07);
    \draw[thick] (-1.5,0) -- (1.5,0);
    \draw[thick] (-1.5,0) -- (0,2.6);
    \draw[thick] (0,2.6) -- (1.5,0);
    \draw[thick] (-1.5,0) -- (0,0.87);
    \draw[thick] (1.5,0) -- (0,0.87);
    \draw[thick] (0,2.6) -- (0,0.87);
    \draw[thick] (-1.5,0) -- (0,1.7);
    \draw[thick] (1.5,0) -- (0,1.7);
    \draw[thick] (0,2.6) -- (0,1.7);
    \node at (0,0) [label=below: $B_{5,a}$]{};

    \end{tikzpicture}\quad \quad \ \
    \begin{tikzpicture}
    \filldraw (-1,1) circle (0.07);
    \filldraw (1,1) circle (0.07);
    \filldraw (1,-1) circle (0.07);
    \filldraw (-1,-1) circle (0.07);
    \filldraw (0,0) circle (0.07);
    \draw[thick] (-1,1) -- (1,1);
    \draw[thick] (-1,1) -- (-1,-1);
    \draw[thick] (1,-1) -- (1,1);
    \draw[thick] (1,-1) -- (-1,-1);
    \draw[thick] (1,1) -- (0,0);
    \draw[thick] (-1,-1) -- (0,0);
    \draw[thick] (-1,1) -- (0,0);
    \draw[thick] (1,-1) -- (0,0);
    \node at (0,-1) [label=below: $B_{5,b}$]{};
    \end{tikzpicture}  \quad \quad \ \ 
    \begin{tikzpicture}
    \filldraw (0,1) circle (0.07);
    \filldraw (0,-1) circle (0.07);
    \filldraw (-1.732,0) circle (0.07);
    \filldraw (1.732,0) circle (0.07);
    \filldraw (0.577,0) circle (0.07);
    \draw[thick] (0,1) -- (0,-1);
    \draw[thick] (0,1) -- (-1.732,0);
    \draw[thick] (0,1) -- (1.732,0);
    \draw[thick] (0,-1) -- (-1.732,0);
    \draw[thick] (0,-1) -- (1.732,0);
    \draw[thick] (0,1) -- (0.577,0);
    \draw[thick] (0,-1) -- (0.577,0);
    \draw[thick] (1.732,0) -- (0.577,0);
    \node at (0,-1) [label=below: $B_{5,c}$]{}; 
    \end{tikzpicture} \quad \quad \ \ 
    \begin{tikzpicture}
    \filldraw (0,1.5) circle (0.07);
    \filldraw (0.951*1.5,0.309*1.5) circle (0.07);
    \filldraw (-0.951*1.5,0.309*1.5) circle (0.07);
    \filldraw (0.588*1.5,-0.809*1.5) circle (0.07);
    \filldraw (-0.588*1.5,-0.809*1.5) circle (0.07);
    \draw[thick] (0.951*1.5,0.309*1.5) -- (0,1.5);
    \draw[thick] (0.951*1.5,0.309*1.5) -- (0.588*1.5,-0.809*1.5);
    \draw[thick] (-0.588*1.5,-0.809*1.5) -- (0.588*1.5,-0.809*1.5);
    \draw[thick] (-0.588*1.5,-0.809*1.5) -- (-0.951*1.5,0.309*1.5);
    \draw[thick] (0,1.5) -- (-0.951*1.5,0.309*1.5);
    \draw[thick] (0,1.5) -- (0.588*1.5,-0.809*1.5);
    \draw[thick] (0,1.5) -- (-0.588*1.5,-0.809*1.5);
    \node at (0,-1) [label=below: $B_{5,d}$]{}; 
    \end{tikzpicture}    
\end{center}

%\begin{proof}
%Let $B$ be a triangular block on $n$ vertices.
%By Definition \ref{recursive}, we may assume that $B$ is obtained by joining an additional vertex to a triangular block on $n-1$ vertices.
%This assumption implies an inductive way to construct triangular blocks.
%If $n=2$, then $B$ is the trivial triangular block.
%If $n=3$, then we can obtain $B$ by joining an additional vertex to $B_2$, which is either a path of length $2$ or a triangle.
%The former is not a triangular block.
%Applying similar arguments to $n=4,5$ completes the proof.
%\end{proof}

%\begin{definition} i don't think we ever use this
 %   Let $G$ be a plane graph. A vertex $v$ in G is called a junction vertex if it is shared by at least two triangular-blocks of $G$.
%\end{definition}

\begin{lemma}\label{poss_tri_block}
Let $B$ be a triangular block of some plane graph.
If $B$ is $\Theta_6^1$-free, then $B$ either contains at most $5$ vertices or is the following graph.
If $B$ is $\Theta_6^2$-free, then $B$ contains at most $5$ vertices.
\end{lemma}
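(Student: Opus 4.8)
The plan is to characterize all triangular blocks that are $\Theta_6^1$-free (resp. $\Theta_6^2$-free) by analyzing how $3$-faces can be glued together. Recall that a triangular block $B$ is a maximal union of $3$-faces connected through shared edges, together with the subgraph induced by their edges. The key structural fact I would exploit is that $\Theta_6^1$ and $\Theta_6^2$ are both small graphs (6 vertices, 7 edges) that are themselves built from triangles sharing edges — indeed $\Theta_6^2$ is two triangles sharing a vertex with a path, and more usefully, both theta graphs arise naturally as subgraphs of "strips" or "fans" of triangles. So the strategy is: start from a single $3$-face and repeatedly attach a new $3$-face along an edge, checking at each stage whether a copy of $\Theta_6^1$ or $\Theta_6^2$ has been created; once it has, that configuration (and anything containing it) is forbidden, which prunes the search tree.

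**First I would** set up the induction/enumeration on the number of interior faces (or vertices) of $B$. For blocks with at most $5$ vertices, Proposition \ref{8blocks} already lists the eight possibilities $B_2, B_3, B_{4,a}, B_{4,b}, B_{5,a}, B_{5,b}, B_{5,c}, B_{5,d}$, each of which one checks directly to be both $\Theta_6^1$-free and $\Theta_6^2$-free (none has $6$ vertices, and one verifies no $\Theta_6$ sits inside on $\le 5$ vertices — impossible since $\Theta_6$ has $6$ vertices, so this is immediate). The real content is blocks with $\ge 6$ vertices. Here I would argue that any triangular block on $\ge 6$ vertices contains, as a sub-configuration of glued triangles, either a "fan" $F_k$ ($k \ge 4$ triangles sharing a common vertex, consecutive ones sharing an edge) or a "strip" of triangles, or a triangulated $4$- or $5$-gon with enough faces; then one locates $\Theta_6^1$ or $\Theta_6^2$ explicitly inside each such minimal configuration. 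For $\Theta_6^2$-freeness the claim is that \emph{every} block on $\ge 6$ vertices contains $\Theta_6^2$; for $\Theta_6^1$-freeness there is exactly one exceptional block on (presumably) $6$ or more vertices — the graph drawn in the statement — which I would identify as, e.g., the "double fan" or the octahedron-minus-an-edge type configuration, and verify it is $\Theta_6^1$-free by checking that its only $6$-cycles have the wrong diagonal structure.

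**The key steps, in order:** (1) reduce to blocks on $\ge 6$ vertices using Proposition \ref{8blocks}; (2) show a block on $\ge 6$ vertices has a vertex of degree $\ge 4$ within the block, or else is a bounded triangulated polygon, and split into these cases; (3) in the "high-degree vertex" case, take $4$ consecutive triangles around that vertex and show their union already contains $\Theta_6^2$ (six outer-ish vertices with a suitable diagonal) — and separately decide whether it contains $\Theta_6^1$, which it will unless the outer structure closes up in the one special way; (4) in the remaining bounded cases (triangulated quadrilateral/pentagon with all diagonals, etc.), check the finitely many graphs directly; (5) assemble: every $\Theta_6^2$-free block has $\le 5$ vertices, and every $\Theta_6^1$-free block has $\le 5$ vertices or is the one exceptional graph. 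Throughout, the embeddedness (plane structure) of $B$ is used to limit how triangles can be attached — a new $3$-face attaches along exactly one edge of the current boundary and adds at most one new vertex.

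**The main obstacle I expect** is step (3)–(4): the case analysis of how triangular blocks grow is genuinely "tedious" (the authors' own word), because a new triangle can be attached either by adding a new vertex (a "pendant" triangle on a boundary edge) or by joining two existing boundary vertices (a "chord" triangle), and the second kind interacts with the plane embedding in ways that multiply cases. The delicate point is proving that the \emph{one} exceptional $\Theta_6^1$-free block is genuinely the only one — i.e. ruling out all other $6$-, $7$-, $\dots$-vertex blocks by exhibiting a $\Theta_6^1$ in each. I would handle this by first pinning down all triangular blocks on exactly $6$ vertices (a short finite list, obtained by adding one triangle to each $5$-vertex block in all embedded ways), checking each against both theta graphs, and then showing that \emph{any} block on $\ge 7$ vertices contains one of the forbidden $6$-vertex blocks (for $\Theta_6^1$) or already $\Theta_6^2$ directly — so that no block on $\ge 7$ vertices is $\Theta_6^1$-free and the exceptional graph, which must therefore live on $6$ vertices, is identified from the $6$-vertex list. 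This monotonicity ("blocks only get worse as they grow") is the crux and is what makes the finite check suffice.
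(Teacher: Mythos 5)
The paper itself never writes out a proof of this lemma --- it is asserted to follow from ``an easy (but somewhat tedious) case-by-case analysis'' --- and your proposal reconstructs essentially that argument: enumerate triangle-configurations by gluing one $3$-face at a time along a boundary edge, prune as soon as a copy of $\Theta_6^1$ or $\Theta_6^2$ appears, and use the fact that a connected sub-collection of interior faces realizes every intermediate vertex count to reduce blocks on $\ge 7$ vertices to the finite $6$-vertex check. This matches the paper's (implicit) approach, and you correctly isolate the crux the authors leave to the reader, namely that every $6$-vertex configuration other than the exceptional one, and every one-triangle extension of the exceptional one, already contains the relevant theta graph.
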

\begin{center}
\begin{tikzpicture}
\filldraw (0,1.5) circle (0.07);
\filldraw (0,-1.5) circle (0.07);
\filldraw (-1.3,0.75) circle (0.07);
\filldraw (1.3,0.75) circle (0.07);
\filldraw (-1.3,-0.75) circle (0.07);
\filldraw (1.3,-0.75) circle (0.07);
\draw[thick] (0,1.5) -- (-1.3,0.75);
\draw[thick] (-1.3,-0.75) -- (-1.3,0.75);
\draw[thick] (-1.3,-0.75) -- (0,-1.5);
\draw[thick] (0,1.5) -- (1.3,0.75);
\draw[thick] (1.3,-0.75) -- (1.3,0.75);
\draw[thick] (1.3,-0.75) -- (0,-1.5);
\draw[thick] (0,1.5) -- (-1.3,-0.75);
\draw[thick] (-1.3,-0.75) -- (1.3,-0.75);
\draw[thick] (1.3,-0.75) -- (0,1.5);
\node at (0,-1.5) [label=below: $B_6$]{}; 
\end{tikzpicture}
\end{center}

Because of this lemma, we do not need to consider other triangular blocks in this paper.

We will now define the \textbf{contribution} of a (trivial) triangular block to the number of edges and faces of the plane graph $G$, that is, we will ``evenly distribute" $e(G)$ and $f(G)$ to its (trivial) triangular blocks.
Here are the exact definitions.

\begin{definition}\label{face_edge_number}

Let $G$ be a plane graph and $B$ be a (possibly trivial) triangular block of $G$.  
We denote its contribution to the total number of edges by
\begin{align*}
e_B = |E(B)|
\end{align*}
and its contribution to the total number of faces by
\begin{align*}
f_B = \sum_{f \in F(G)} \frac{|E(f) \cap E(B)|}{|E(f)|}.
\end{align*}

\end{definition}

If $f\in F(G)$ is not adjacent to $B$, then the corresponding summand in $f_B$ is $0$;
if $f\in F(G)$ is adjacent to $B$, then the corresponding summand in $f_B$ is the proportion of the boundaries of $f$ contained in $B$.
In particular, if $f\in F(G)$ is an interior face of $B$, then the summand is simply $1$.
Since only faces adjacent to $B$ will have non-zero contribution, we only need to consider adjacent faces when calculating $f_B$.

%\begin{remark}
%Since the (weighted) face number $f_B$ and edge number $e_B$ are defined, it is natural to define another important property, the (weighted) vertex number $v_B$, analogous to the definition of $f_B$. However, $v_B$ is never used later in this paper, making this definition redundant and unnecessary. 
%\end{remark}
Since $\mathcal{B}$ forms a partition of $E(G)$,
we obtain an identity:
\begin{equation}\label{equ-1}
\sum_{B\in\mathcal{B}} e_B =e(G).
\end{equation}
Locally, $\mathcal{B}$ also forms a partition of $E(f)$ for each $f\in F(G)$, which gives a second identity:
\begin{equation}\label{equ-2}
\sum_{B \in \mathcal{B}} f_B 
= \sum_{B \in \mathcal{B}} \sum_{f \in F(G)} \frac{|E(f) \cap E(B)|}{|E(f)|} 
= \sum_{f \in F(G)}  \sum_{B \in \mathcal{B}}\frac{|E(f) \cap E(B)|}{|E(f)|}
= \sum_{f \in F(G)} 1
=f(G).
\end{equation}

In the last part of this section, we will convert the inequalities in our theorems to ``small" inequalities involving only the contributions of each (trivial) triangular block.
Proving the theorems are thus reduced to proving each of these ``small" inequality.

Let $H$ be some forbidden graph.
We want to prove some linear bound of the form $ex_{\mathcal{P}}(n,H)\leq \frac{\alpha}{\beta}(n-2)$, called \textbf{target inequality}, for $\alpha,\beta\in \N$.
By definition, it suffices to show that for any $H$-free plane graph $G$, we have $e(G)\leq \frac{\alpha}{\beta}(n(G)-2)$.
Applying Euler's formula, it's equivalent to show $e(G)\leq \frac{\alpha}{\beta}(e(G)-f(G))$ or $(\beta-\alpha)e(G)+\alpha f(G)\leq 0$.
Define $g(x,y)=(\beta-\alpha)x+\alpha y$ to be the \textbf{contribution formula}.
By Equation (\ref{equ-1}) and Equation (\ref{equ-2}),
the original problem is transformed into the following lemma.
\begin{lemma}
If $g(e_{B},f_{B})\leq 0$ for all $B\in \mathcal{B}$ then $ex_{\mathcal{P}}(n,H)\leq \frac{\alpha}{\beta}(n-2)$.
In particular, if the equality holds for some $n$, then every $g(e_{B},f_{B})= 0$ in the extremal construction.
\end{lemma}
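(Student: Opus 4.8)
The plan is to combine the two global identities from Equations (\ref{equ-1}) and (\ref{equ-2}) with the hypothesis that $g(e_B, f_B) \le 0$ for every block $B \in \mathcal{B}$. Since $g(x,y) = (\beta - \alpha)x + \alpha y$ is linear in both arguments, summing the per-block inequality over all blocks gives
\begin{equation*}
0 \ge \sum_{B \in \mathcal{B}} g(e_B, f_B) = (\beta - \alpha)\sum_{B \in \mathcal{B}} e_B + \alpha \sum_{B \in \mathcal{B}} f_B = (\beta - \alpha)\, e(G) + \alpha\, f(G) = g(e(G), f(G)).
\end{equation*}
So first I would record this reduction: it suffices to show $g(e(G), f(G)) \le 0$ implies $e(G) \le \frac{\alpha}{\beta}(n(G) - 2)$ for every $H$-free plane graph $G$ on $n = n(G)$ vertices, and then conclude the bound on $ex_{\mathcal{P}}(n, H)$ by taking the maximum over all such $G$.

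Next I would unwind the definition of $g$ exactly as the paragraph preceding the lemma does, but in the reverse direction. From $(\beta - \alpha)e(G) + \alpha f(G) \le 0$ we get $\beta\, e(G) \le \alpha\,(e(G) - f(G))$. Here one must be slightly careful about connectedness: Euler's formula $n(G) - e(G) + f(G) = 2$ holds for connected plane graphs, giving $e(G) - f(G) = n(G) - 2$, hence $\beta\, e(G) \le \alpha(n(G) - 2)$ and therefore $e(G) \le \frac{\alpha}{\beta}(n(G) - 2)$ (using $\beta > 0$). If $G$ is disconnected with $c$ components, then $n(G) - e(G) + f(G) = 1 + c$, so $e(G) - f(G) = n(G) - 1 - c \le n(G) - 2$, and the same chain of inequalities still yields $e(G) \le \frac{\alpha}{\beta}(n(G) - 2)$; alternatively one reduces to the connected case by adding edges between components, which only increases $e(G)$ while keeping $G$ planar, or one simply notes the extremal graph may be assumed connected. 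Since this holds for every $H$-free plane graph on $n$ vertices, taking the supremum gives $ex_{\mathcal{P}}(n, H) \le \frac{\alpha}{\beta}(n - 2)$.

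For the "in particular" clause, suppose $n$ is such that equality $ex_{\mathcal{P}}(n, H) = \frac{\alpha}{\beta}(n-2)$ is attained by some extremal $H$-free plane graph $G$. Then $e(G) = \frac{\alpha}{\beta}(n - 2)$, and running the computation above as equalities forces $g(e(G), f(G)) = 0$ (this also forces $G$ to be connected, so Euler's formula is tight). But $g(e(G), f(G)) = \sum_{B} g(e_B, f_B)$ and each summand is $\le 0$ by hypothesis; a sum of nonpositive terms equal to zero forces every term to vanish, so $g(e_B, f_B) = 0$ for all $B \in \mathcal{B}$, as claimed.

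There is essentially no hard step here: the entire argument is the chain of algebraic manipulations already spelled out in the excerpt, now read backwards and combined with the trivial fact that a sum of nonpositive reals is zero iff each is zero. The only place demanding a moment of care is the edge case of disconnected graphs (or graphs with isolated vertices, where $g(e_B,f_B)$ is only defined for $B \in \mathcal{B}$, i.e.\ for blocks coming from actual edges) — and this is handled either by the inequality $e(G) - f(G) \le n(G) - 2$ for plane graphs with at least one vertex, or by restricting attention to connected extremal graphs without loss of generality. I would therefore present the proof in the connected case as the main line and dispatch the general case in a sentence.
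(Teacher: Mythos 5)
Your argument is correct and is exactly the reasoning the paper intends: the lemma is stated as an immediate consequence of the reduction in the preceding paragraph (Euler's formula plus the two partition identities (\ref{equ-1}) and (\ref{equ-2})), and your write-up simply makes that chain explicit, with the sum-of-nonpositive-terms observation for the equality clause. Your extra care about disconnected graphs matches the paper's later remark that one may assume $G$ is connected by adding edges between components, so nothing is missing.
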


However, this lemma is sometimes not true.
It is possible that the contribution of some $B$ is positive, but when considered with possible adjacent triangular blocks, we show that the group as a whole will have non-positive contribution. Thus, we introduce the following definition.

Let $C_1,C_2,\dots,C_k$ be a partition of $\mathcal{B}$.
We can extend Definition \ref{face_edge_number} as follows:
\begin{align*}
    e_{C_i}=\sum_{B\in C_i}e_B
    \text{ and }
    f_{C_i}=\sum_{B\in C_i}f_B.
\end{align*}
Here, we call $C_i$ a \textbf{cluster} of triangular blocks and call $e_{C_i}$ and $f_{C_i}$ the contribution of $C_i$ to the edge number and face number, respectively.
Since $C_i$ forms a partition of $\mathcal{B}$, analogues of Equation (\ref{equ-1}) and Equation (\ref{equ-2}) hold:
\begin{equation}
\sum_{i=1}^k e_{C_i} =e(G)
\text{ and }
\sum_{i=1}^k f_{C_i} =f(G),
\end{equation}
and these two identities imply the following lemma.
\begin{lemma}\label{tri-b}
If $g(e_{C_i},f_{C_i})\leq 0$ for all $1\leq i\leq k$, then $ex_{\mathcal{P}}(n,H)\leq \frac{\alpha}{\beta}(n-2)$.
In particular, if the equality holds for some $n$, then every $g(e_{C_i},f_{C_i})= 0$ in the extremal construction.
\end{lemma}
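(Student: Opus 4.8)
The plan is to exploit the linearity of the contribution formula $g(x,y)=(\beta-\alpha)x+\alpha y$ together with the two cluster identities $\sum_{i=1}^k e_{C_i}=e(G)$ and $\sum_{i=1}^k f_{C_i}=f(G)$ established just before the lemma. Summing $g$ over all clusters and using linearity in both arguments gives
\[
\sum_{i=1}^k g(e_{C_i},f_{C_i})=(\beta-\alpha)\sum_{i=1}^k e_{C_i}+\alpha\sum_{i=1}^k f_{C_i}=(\beta-\alpha)e(G)+\alpha f(G)=g\bigl(e(G),f(G)\bigr).
\]
Hence the hypothesis $g(e_{C_i},f_{C_i})\le 0$ for every $i$ immediately forces $g(e(G),f(G))\le 0$, i.e. $(\beta-\alpha)e(G)+\alpha f(G)\le 0$, for every $H$-free plane graph $G$.

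Next I would plug this into Euler's formula exactly as in the paragraph preceding the lemma. For a plane graph with $c\ge 1$ connected components one has $n(G)-e(G)+f(G)=1+c\ge 2$, so $f(G)\ge e(G)-n(G)+2$. Since $\alpha\in\N$ is nonnegative, substituting this lower bound on $f(G)$ into $(\beta-\alpha)e(G)+\alpha f(G)\le 0$ yields $\beta e(G)-\alpha n(G)+2\alpha\le 0$, and dividing by $\beta>0$ gives $e(G)\le\frac{\alpha}{\beta}(n(G)-2)$. As $G$ was an arbitrary $H$-free plane graph on $n$ vertices, this shows $ex_{\mathcal{P}}(n,H)\le\frac{\alpha}{\beta}(n-2)$.

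For the equality clause, suppose $ex_{\mathcal{P}}(n,H)=\frac{\alpha}{\beta}(n-2)$ for some $n$, and let $G$ be an extremal construction, so $e(G)=\frac{\alpha}{\beta}(n-2)$. Tracing through the chain of inequalities above, every step must be an equality; in particular $\sum_{i=1}^k g(e_{C_i},f_{C_i})=0$, and since each summand is $\le 0$ by hypothesis, each summand must vanish, i.e. $g(e_{C_i},f_{C_i})=0$ for all $i$.

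There is essentially no obstacle here: the statement is a direct consequence of the linearity of $g$, the two counting identities for clusters, and Euler's formula. The only point that deserves a moment's care is the direction of Euler's inequality when $G$ is not assumed connected, and it works out in our favor precisely because $f(G)\ge e(G)-n(G)+2$ together with $\alpha\ge 0$ pushes the estimate the right way; so one need not separately reduce to the connected case.
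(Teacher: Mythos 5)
Your proof is correct and follows essentially the same route the paper intends: sum the linear functional $g$ over the two cluster identities to get $g(e(G),f(G))\le 0$, then feed this into Euler's formula to recover $e(G)\le\frac{\alpha}{\beta}(n-2)$, with the equality clause forced because a sum of nonpositive terms vanishing makes each term vanish. The only (minor, and welcome) difference is that you handle disconnected graphs directly via $f(G)\ge e(G)-n(G)+2$ and $\alpha\ge 0$, whereas the paper instead reduces to the connected case by adding edges between components without creating the forbidden subgraph.
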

For sake of simplicity, we  write $g(C_i)$ instead of $g(e_{C_i},f_{C_i})$.

\section{Proof of Theorem \ref{theta6-1}}

\subsection{Proof of the Lower Bound}
Here, we present a class of extremal constructions that satisfies the bound given by Theorem \ref{theta6-1}. 
We start with a $C_4$-free plane graph as a skeleton, in which every edge is adjacent to a $3$-face and a $5$-face. 
According to \cite{Dowden}, we can create infinitely many such graphs by alternatively gluing graph (a) and graph (b) in Figure \ref{fig:sharp}, where red pentagons and blue pentagons are matched up.
Then we replace every $3$-face (gray triangles) by $B_{5,a}$.
The resultant graphs contain $170k+70$ vertices and $450k+180$ edges, achieving the equality of $ex_{\mathcal{P}}(n,\Theta_6^1)\leq\frac{45}{17}(n-2)$.
% However, here, all of our triangular blocks are $B_{5,a}$ and not just a triangle.
% In figure \ref{fig:sharp}, every red-colored triangle is actually a $B_{5,a}$. We start with the graph labeled (a) in Figure \ref{fig:sharp}. Let (a) be graph $G_0$. We see that (a) has 70 vertices and 180 edges, so $\Theta(1) = -\frac{90}{17}$ in our example.
\begin{figure}[H] 
    \centering
\begin{center}
\begin{tabular}{c c}
\begin{tikzpicture}
    \filldraw[blue!30] ({0.5*sin(72)},{0.5*cos(72)})--({0.5*sin(144)},{0.5*cos(144)})--({0.5*sin(-144)},{0.5*cos(-144)})--({0.5*sin(-72)},{0.5*cos(-72)})--(0,0.5);
    \foreach \a in {0,...,4}
        \filldraw[gray!30] ({0.5*sin(72*\a)},{0.5*cos(72*\a)}) -- ({-sin(72*\a-144)},{-cos(72*\a-144)}) -- ({0.5*sin(72*\a+72)},{0.5*cos(72*\a+72)});
    \foreach \a in {0,...,4}
        \filldraw[gray!30] ({1.5*sin(72*\a+126)},{1.5*cos(72*\a+126)}) -- ({1.5*sin(72*\a+90)},{1.5*cos(72*\a+90)}) -- ({-sin(72*\a-72)},{-cos(72*\a-72)});
    \foreach \a in {0,...,4}
        \filldraw[gray!30] ({1.5*sin(72*\a+54)},{1.5*cos(72*\a+54)}) -- ({1.5*sin(72*\a+90)},{1.5*cos(72*\a+90)}) -- ({2*sin(72*\a+72)},{2*cos(72*\a+72)});
    \foreach \a in {0,...,4}
        \filldraw[gray!30] ({-3*sin(72*\a-72)},{-3*cos(72*\a-72)}) -- ({-3*sin(72*\a)},{-3*cos(72*\a)}) -- ({2*sin(72*\a+144)},{2*cos(72*\a+144)});
        
    \foreach \a in {0,...,4}
        \draw[very thick, blue] ({0.5*sin(72*\a)},{0.5*cos(72*\a)}) -- ({0.5*sin(72*\a+72)},{0.5*cos(72*\a+72)});
    \foreach \a in {0,...,4}
        \draw[thick] ({0.5*sin(72*\a)},{0.5*cos(72*\a)}) -- ({-sin(72*\a+144)},{-cos(72*\a+144)});
    \foreach \a in {0,...,4}
        \draw[thick] ({0.5*sin(72*\a)},{0.5*cos(72*\a)}) -- ({-sin(72*\a-144)},{-cos(72*\a-144)});
    \foreach \a in {0,...,9}
        \draw[thick] ({1.5*sin(36*\a+90)},{1.5*cos(36*\a+90)}) -- ({1.5*sin(36*\a+126)},{1.5*cos(36*\a+126)});
    \foreach \a in {0,...,4}
        \draw[thick] ({1.5*sin(72*\a+90)},{1.5*cos(72*\a+90)}) -- ({-sin(72*\a-72)},{-cos(72*\a-72)});
    \foreach \a in {0,...,4}
        \draw[thick] ({1.5*sin(72*\a+126)},{1.5*cos(72*\a+126)}) -- ({-sin(72*\a-72)},{-cos(72*\a-72)});
    \foreach \a in {0,...,4}
        \draw[thick] ({1.5*sin(72*\a+90)},{1.5*cos(72*\a+90)}) -- ({2*sin(72*\a+72)},{2*cos(72*\a+72)});
    \foreach \a in {0,...,4}
        \draw[thick] ({1.5*sin(72*\a+54)},{1.5*cos(72*\a+54)}) -- ({2*sin(72*\a+72)},{2*cos(72*\a+72)});
    \foreach \a in {0,...,4}
        \draw[very thick, red] ({-3*sin(72*\a)},{-3*cos(72*\a)}) -- ({-3*sin(72*\a+72)},{-3*cos(72*\a+72)});
    \foreach \a in {0,...,4}
        \draw[thick] ({-3*sin(72*\a)},{-3*cos(72*\a)}) -- ({2*sin(72*\a+144)},{2*cos(72*\a+144)});
    \foreach \a in {0,...,4}
        \draw[thick] ({-3*sin(72*\a)},{-3*cos(72*\a)}) -- ({2*sin(72*\a-144)},{2*cos(72*\a-144)});
        
    \foreach \a in {0,...,4}
        \filldraw ({0.5*sin(72*\a)},{0.5*cos(72*\a)})  circle (0.07);
    \foreach \a in {0,...,4}
        \filldraw ({-sin(72*\a)},{-cos(72*\a)})  circle (0.07);
    \foreach \a in {0,...,9}
        \filldraw ({1.5*sin(36*\a+90)},{1.5*cos(36*\a+90)})  circle (0.07);
    \foreach \a in {0,...,4}
        \filldraw ({2*sin(72*\a)},{2*cos(72*\a)})  circle (0.07);
    \foreach \a in {0,...,4}
        \filldraw ({-3*sin(72*\a)},{-3*cos(72*\a)})  circle (0.07);        
\end{tikzpicture}
&  
\begin{tikzpicture}
    \filldraw[red!30] ({-0.5*sin(72)},{-0.5*cos(72)})--({-0.5*sin(144)},{-0.5*cos(144)})--({-0.5*sin(-144)},{-0.5*cos(-144)})--({-0.5*sin(-72)},{-0.5*cos(-72)})--(0,-0.5);
    \foreach \a in {0,...,4}
        \filldraw[gray!30] ({0.8*sin(72*\a+90)},{0.8*cos(72*\a+90)}) -- ({0.8*sin(72*\a+126)},{0.8*cos(72*\a+126)}) -- ({-0.5*sin(72*\a-72)},{-0.5*cos(72*\a-72)});
    \foreach \a in {0,...,4}
        \filldraw[gray!30] ({1.2*sin(72*\a+72)},{1.2*cos(72*\a+72)}) -- ({0.8*sin(72*\a+90)},{0.8*cos(72*\a+90)}) -- ({1.6*sin(72*\a+90)},{1.6*cos(72*\a+90)});
    \foreach \a in {0,...,4}
        \filldraw[gray!30] ({1.2*sin(72*\a+72)},{1.2*cos(72*\a+72)}) -- ({0.8*sin(72*\a+54)},{0.8*cos(72*\a+54)}) -- ({1.6*sin(72*\a+54)},{1.6*cos(72*\a+54)});
    \foreach \a in {0,...,4}
        \filldraw[gray!30] ({-2.2*sin(72*\a)},{-2.2*cos(72*\a)}) -- ({-2.2*sin(72*\a+24)},{-2.2*cos(72*\a+24)}) -- ({1.6*sin(72*\a-162)},{1.6*cos(72*\a-162)});
    \foreach \a in {0,...,4}
        \filldraw[gray!30] ({-2.2*sin(72*\a)},{-2.2*cos(72*\a)}) -- ({-2.2*sin(72*\a-24)},{-2.2*cos(72*\a-24)}) -- ({1.6*sin(72*\a+162)},{1.6*cos(72*\a+162)});
    \foreach \a in {0,...,4}
        \filldraw[gray!30] ({-2.2*sin(72*\a-168)},{-2.2*cos(72*\a-168)}) -- ({3*sin(72*\a)},{3*cos(72*\a)}) -- ({-2.2*sin(72*\a+168)},{-2.2*cos(72*\a+168)});
        
    \foreach \a in {0,...,4}
        \draw[very thick, red] ({-0.5*sin(72*\a)},{-0.5*cos(72*\a)}) -- ({-0.5*sin(72*\a+72)},{-0.5*cos(72*\a+72)});
    \foreach \a in {0,...,4}
        \draw[thick] ({0.8*sin(72*\a+90)},{0.8*cos(72*\a+90)}) -- ({0.8*sin(72*\a+126)},{0.8*cos(72*\a+126)});
    \foreach \a in {0,...,4}
        \draw[thick] ({0.8*sin(72*\a+90)},{0.8*cos(72*\a+90)})  circle (0.07) -- ({-0.5*sin(72*\a-72)},{-0.5*cos(72*\a-72)})  circle (0.07);
    \foreach \a in {0,...,4}
        \draw[thick] ({0.8*sin(72*\a+126)},{0.8*cos(72*\a+126)})  circle (0.07) -- ({-0.5*sin(72*\a-72)},{-0.5*cos(72*\a-72)})  circle (0.07);
    \foreach \a in {0,...,4}
        \draw[thick] ({1.2*sin(72*\a+72)},{1.2*cos(72*\a+72)}) -- ({0.8*sin(72*\a+90)},{0.8*cos(72*\a+90)});
    \foreach \a in {0,...,4}
        \draw[thick] ({1.2*sin(72*\a+72)},{1.2*cos(72*\a+72)}) -- ({0.8*sin(72*\a+54)},{0.8*cos(72*\a+54)});
    \foreach \a in {0,...,9}
        \draw[thick] ({0.8*sin(36*\a+90)},{0.8*cos(36*\a+90)}) -- ({1.6*sin(36*\a+90)},{1.6*cos(36*\a+90)});
    \foreach \a in {0,...,4}
        \draw[thick] ({1.2*sin(72*\a+72)},{1.2*cos(72*\a+72)}) -- ({1.6*sin(72*\a+90)},{1.6*cos(72*\a+90)});
    \foreach \a in {0,...,4}
        \draw[thick] ({1.2*sin(72*\a+72)},{1.2*cos(72*\a+72)}) -- ({1.6*sin(72*\a+54)},{1.6*cos(72*\a+54)});
    \foreach \a in {0,...,4}
        \draw[thick] ({-2.2*sin(72*\a-120)},{-2.2*cos(72*\a-120)}) -- ({1.6*sin(72*\a+54)},{1.6*cos(72*\a+54)});
    \foreach \a in {0,...,4}
        \draw[thick] ({-2.2*sin(72*\a-96)},{-2.2*cos(72*\a-96)}) -- ({1.6*sin(72*\a+90)},{1.6*cos(72*\a+90)});
    \foreach \a in {0,...,4}
        \draw[thick] ({-2.2*sin(72*\a-72)},{-2.2*cos(72*\a-72)}) -- ({1.6*sin(72*\a+90)},{1.6*cos(72*\a+90)});
    \foreach \a in {0,...,4}
        \draw[thick] ({-2.2*sin(72*\a-72)},{-2.2*cos(72*\a-72)}) -- ({1.6*sin(72*\a+126)},{1.6*cos(72*\a+126)});
    \foreach \a in {0,...,14}
        \draw[thick] ({-2.2*sin(24*\a)},{-2.2*cos(24*\a)}) -- ({-2.2*sin(24*\a+24)},{-2.2*cos(24*\a+24)});
    \foreach \a in {0,...,4}
        \draw[thick] ({-2.2*sin(72*\a+168)},{-2.2*cos(72*\a+168)}) -- ({3*sin(72*\a)},{3*cos(72*\a)});
    \foreach \a in {0,...,4}
        \draw[thick] ({-2.2*sin(72*\a-168)},{-2.2*cos(72*\a-168)}) -- ({3*sin(72*\a)},{3*cos(72*\a)});
    \foreach \a in {0,...,4}
        \draw[very thick, blue] ({3*sin(72*\a)},{3*cos(72*\a)}) -- ({3*sin(72*\a+72)},{3*cos(72*\a+72)});
        
    \foreach \a in {0,...,4}
        \filldraw ({-0.5*sin(72*\a)},{-0.5*cos(72*\a)})  circle (0.07);
    \foreach \a in {0,...,9}
        \filldraw ({0.8*sin(36*\a+90)},{0.8*cos(36*\a+90)})  circle (0.07);
    \foreach \a in {0,...,4}
        \filldraw ({1.2*sin(72*\a)},{1.2*cos(72*\a)})  circle (0.07);
    \foreach \a in {0,...,9}
        \filldraw ({1.6*sin(36*\a+90)},{1.6*cos(36*\a+90)})  circle (0.07);
    \foreach \a in {0,...,14}
        \filldraw ({-2.2*sin(24*\a)},{-2.2*cos(24*\a)})  circle (0.07);        
    \foreach \a in {0,...,4}
        \filldraw ({3*sin(72*\a)},{3*cos(72*\a)})  circle (0.07);
          
    % \foreach \a in {0,...,4}
    %     \draw[thick]
\end{tikzpicture}
\\

(a) & (b)
\end{tabular}
\end{center}
\caption{Extremal Constructions of $\Theta_6^1$-free Graph}
    \label{fig:sharp}
\end{figure}
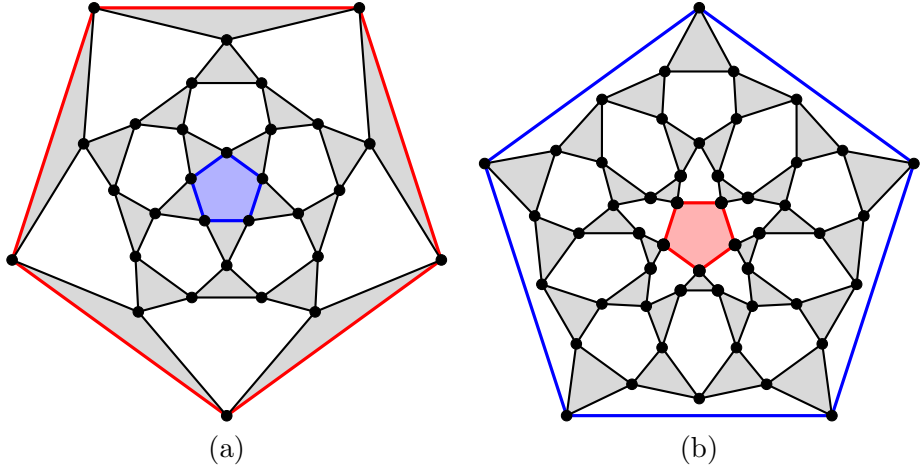
% Building from (a), we place (b) around (a) so that the red-colored edges match up, and then place (a) around (b) so  that the blue-colored edges match up to get a larger construction. This iteration adds $105 + 65 = 170$ vertices and $275 + 175 = 450$ edges to (a).\\ \\
% Thus, for each $k^{th}$ iteration, $n = 170k + 70$ and $e(G_k) = 450k + 180$. Then, we see that $e(G_k) = \frac{45}{17}(n-2)$ for each iteration.
\subsection{Proof of the Upper Bound}
Let $G$ be a $\Theta_6^1$-free plane graph and $\mathcal{B}$ be the set of its (trivial) triangular blocks.
We will partition $\mathcal{B}$ into clusters, but we postpone explaining how exactly we divide it.
In order to apply Lemma \ref{tri-b}, we convert the target inequality $ex_{\mathcal{P}}(n,\Theta_6^1)\leq\frac{45}{17}(n-2)$ to the contribution formula
\begin{align*}
    g(C)=g(e_{C},f_{C})=45f_{C}-28e_{C}
\end{align*}
for each cluster $C$.
% that this construction and corresponding bound is the best construction for a $\Theta_6^1$-free graph by calculating the contributions of each cluster. 
% From our extremal construction, we obtain the target inequality $ex_{\mathcal{P}}(n,\Theta_6^1)\leq \frac{45}{17}(n-2)$. Thus, our contribution formula for each cluster $C_i$ is
Inspired by the extremal constructions, we want to show that the contribution $g$ is zero for $B_{5,a}$ and is strictly less than zero for other clusters. 

Throughout this section, we assume that $G$ has at least $6$ vertices, i.e., $n \geq 6$. 
This condition implies that for each triangular block $B$ of $G$ with $n(B)\leq 5$, one of the faces of $B$ is not an interior face, otherwise $B$ has to be entire $G$.
We thus always draw the non-interior face as the unbounded face.
Also, we assume that $G$ is connected, otherwise we can freely add an edge between two connected components of $G$ without creating a $\Theta^1_6$.

By Lemma \ref{poss_tri_block}, there are nine possible blocks in $\mathcal{B}$.
Let us estimate the contribution of them one-by-one.

\subsubsection{$g(B_{2})\leq -\frac{11}{2}.$}

\begin{center}
\begin{tikzpicture}
\filldraw (-1,0) circle (0.07);
\filldraw (1,0) circle (0.07);
\draw[thick] (-1,0) -- (1,0);
\node[left] at (-1,0) {$x_1$};
\node[right] at (1,0) {$x_2$};
\end{tikzpicture}
\end{center}
\textit{Proof.}
Notice that $B_2$ has no interior faces.
Pick $f\in F(G)$ adjacent to $x_1x_2$.
If $f$ is a face of length $2$, then $B_2$ is the entire $G$, which is impossible as $G$ has at least $6$ vertices.
Also, $f$ cannot be a face of length $3$ by Remark \ref{adj}. 
Thus, $f$ has length at least $4$, and the this argument holds for each side of $x_1x_2$.
Therefore, $f_{B_2}\leq \frac{1}{4}\cdot 2$, $e_{B_2}=1$, and $g(B_2)\leq -\frac{11}{2}$.
%Let $B_2$ consist of two vertices $x_1$ and $x_2$ and an edge $x_1x_2$. 
%because that would contradict our definition of a triangular block.
%face joining each side of $x_1x_2$ must be an exterior face and have length at least $4$. Therefore, we have $\hat{f}(B) \leq \frac{1}{4} + \frac{1}{4} = \frac{1}{2}$. Using $e(B) = 1$, we obtain $45\hat{f}(B) - 28e(B) \leq 45 \cdot \frac{1}{2} - 28 \cdot 1 = -\frac{11}{2} < 0$.

\subsubsection{$g(B_3)\leq -\frac{21}{4}.$}
\begin{center}
\begin{tikzpicture}
\filldraw (-1,0) circle (0.07);
\filldraw (1,0) circle (0.07);
\filldraw (0,1.732) circle (0.07);
\draw[thick] (-1,0) -- (1,0);
\draw[thick] (-1,0) -- (0,1.732);
\draw[thick] (0,1.732) -- (1,0);
\node[above] at (0,1.732) {$x_1$};
\node[left] at (-1,0) {$x_2$};
\node[right] at (1,0) {$x_3$};
\end{tikzpicture}
\end{center}
\textit{Proof.}
%Let $B$ consist of three vertices $x_1$, $x_2$, and $x_3$ and three edges $x_1x_2$, $x_2x_3$, and $x_1x_3$. 
%Because $G$ has more than $6$ vertices, the face joining each of $x_1x_2$, $x_2x_3$ and $x_3x_1$ must be either $x_1x_2x_3$ or an exterior face.
The only interior face of $B_3$ is the bounded $3$-face $x_1x_2x_3$.
For $f\in F(G)$ adjacent to $x_1x_2$, $x_2x_3$, or $x_1x_3$ on the outside, we know it has length at least $4$ by Remark \ref{adj}.
Therefore, $f_{B_3}\leq 1+ \frac{1}{4}\cdot 3$, $e_{B_3}=3$, and $g(B_3)\leq -\frac{21}{4}$.

\subsubsection{$g(B_{4,a})\leq -\frac{3}{2}.$}
\begin{center}
\begin{tikzpicture}
\filldraw (-1,0) circle (0.07);
\filldraw (1,0) circle (0.07);
\filldraw (0,1.732) circle (0.07);
\filldraw (0,0.577) circle (0.07);
\draw[thick] (-1,0) -- (1,0);
\draw[thick] (-1,0) -- (0,1.732);
\draw[thick] (0,1.732) -- (1,0);
\draw[thick] (-1,0) -- (0,0.577);
\draw[thick] (1,0) -- (0,0.577);
\draw[thick] (0,1.732) -- (0,0.577);
\node[above] at (0,1.732) {$x_1$};
\node[left] at (-1,0) {$x_2$};
\node[right] at (1,0) {$x_3$};
\node[below] at (0,0.577) {$x_4$};
\end{tikzpicture}
\end{center}
\textit{Proof.} 
The interior faces of $B_{4,a}$ are the three bounded $3$-faces in the figure above. 
We will show that the faces of $G$ adjacent to $x_1x_2$, $x_1x_3$, and $x_2x_3$ on the outside cannot all have length $4$.
Suppose $f\in F(G)$ is the $4$-face adjacent to $x_1x_2$ on the outside.
Then there are two possibilities: $f$ shares one common edge with $B_{4,a}$ or two common edges with $B_{4,a}$.

If $f$ shares one common edge with $B_{4,a}$, then there exist two additional vertices $y_1$ and $y_2$ such that $x_1x_2y_1y_2$ is a $4$-face.
However, there is a contradiction that such a configuration contains $x_1x_2y_1y_2\cup x_1x_2x_4x_3$ isomorphic to $\Theta_6^1$.
    % \label{theta61K4case1} The bounded $4$-face $x_1x_2y_1y_2$ joins $B$ in exactly one edge $x_1x_2$. Clearly, neither $y_1$ nor $y_2$ can be $x_3$.
\begin{center}
\begin{tikzpicture}
\draw[thick] (-1,0) -- (1,0);
\draw[thick] (-1,0) -- (0,1.732);
\draw[thick] (0,1.732) -- (1,0);
\draw[thick] (-1,0) -- (0,0.577);
\draw[thick] (1,0) -- (0,0.577);
\draw[thick] (0,1.732) -- (0,0.577);
\draw[thick,red] (-2,0.577) -- (-1,0);
\draw[thick,red] (-2,0.577) -- (-1,2.305);
\draw[thick,red] (0,1.732) -- (-1,2.305);
\node[above] at (0,1.732) {$x_1$};
\node[left] at (-1,0) {$x_2$};
\node[right] at (1,0) {$x_3$};
\node[below] at (0,0.577) {$x_4$};
\node[left] at (-2, 0.577) {$y_1$};
\node[above] at (-1,2.305) {$y_2$};
\filldraw (-1,0) circle (0.07);
\filldraw (1,0) circle (0.07);
\filldraw (0,1.732) circle (0.07);
\filldraw (0,0.577) circle (0.07);
\filldraw (-2,0.577) circle (0.07);
\filldraw (-1,2.305) circle (0.07);
\end{tikzpicture}
\end{center}
If $f$ shares two common edges with $B_{4,a}$, then there exists an additional vertex $y$ such that $x_1x_2yx_3$ is a $4$-face.
By the previous paragraph, we know that $f'\in F(G)$ adjacent to $x_2x_3$ on the outside cannot be a $4$-face as it cannot share two common edges with $B_{4,a}$.
% The unbounded $4$-face $x_1x_2y_1y_2$ joins $B$ in exactly one edge $x_1x_2$. Clearly, neither $y_1$ nor $y_2$ can be $x_3$.
    % \begin{center}
    % \begin{tikzpicture}
    % \filldraw (-1,0) circle (0.07);
    % \filldraw (1,0) circle (0.07);
    % \filldraw (0,1.732) circle (0.07);
    % \filldraw (0,0.577) circle (0.07);
    % \filldraw (1.5,-1.442) circle (0.07);
    % \filldraw (2.5,0.29) circle (0.07);
    % \draw[thick] (-1,0) -- (1,0);
    % \draw[thick] (-1,0) -- (0,1.732);
    % \draw[thick] (0,1.732) -- (1,0);
    % \draw[thick] (-1,0) -- (0,0.577);
    % \draw[thick] (1,0) -- (0,0.577);
    % \draw[thick] (0,1.732) -- (0,0.577);
    % \draw[thick,red] (1.5,-1.442) -- (-1,0);
    % \draw[thick,red] (2.5,0.29) -- (0,1.732);
    % \draw[thick,red] (1.5,-1.442) -- (2.5,0.29);
    % \node[above] at (0,1.732) {$x_1$};
    % \node[left] at (-1,0) {$x_2$};
    % \node[right] at (1,0) {$x_3$};
    % \node[below] at (0,0.577) {$x_4$};
    % \node[left] at (1.5,-1.442) {$y_1$};
    % \node[above] at (2.5,0.29) {$y_2$};
    % \end{tikzpicture}
    % \end{center}
    % Then the path $x_1x_2y_1y_2x_1x_3x_4x_2$ also forms a $\Theta^1_6$, a contradiction.

    % \label{theta61K4case3} The bounded $4$-face joins another exterior edge of $B$ besides $x_1x_2$, WLOG say $x_1x_2yx_3$ joins $x_1x_2$ and $x_1x_3$;
\begin{center}
\begin{tikzpicture}
\draw[thick] (-1,0) -- (1,0);
\draw[thick] (-1,0) -- (0,1.732);
\draw[thick] (0,1.732) -- (1,0);
\draw[thick] (-1,0) -- (0,0.577);
\draw[thick] (1,0) -- (0,0.577);
\draw[thick] (0,1.732) -- (0,0.577);
\draw[thick,red] (-1,0) -- (0,2.7);
\draw[thick,red] (1,0) -- (0,2.7);
\node[above] at (0,1.732) {$x_1$};
\node[left] at (-1,0) {$x_2$};
\node[right] at (1,0) {$x_3$};
\node[below] at (0,0.577) {$x_4$};
\node[above] at (0,2.7) {$y$};
\filldraw (-1,0) circle (0.07);
\filldraw (1,0) circle (0.07);
\filldraw (0,1.732) circle (0.07);
\filldraw (0,0.577) circle (0.07);
\filldraw (0,2.7) circle (0.07);
\end{tikzpicture}
\end{center} 
We have thus shown that one of the faces adjacent to $x_1x_2$, $x_1x_3$, and $x_2x_3$ has length at least $5$.
Therefore, $f_{B_{4,a}}\leq 3+\frac{1}{4}\cdot 2+\frac{1}{5}$, $e_{B_{4,a}}=6$, and $g(B_{4,a})\leq -\frac{3}{2}$.

\subsubsection{$g(B_{4,b})\leq -5.$}
\begin{center}
\begin{tikzpicture}
\filldraw (-1,1) circle (0.07);
\filldraw (1,1) circle (0.07);
\filldraw (1,-1) circle (0.07);
\filldraw (-1,-1) circle (0.07);
\draw[thick] (-1,1) -- (1,1);
\draw[thick] (-1,1) -- (-1,-1);
\draw[thick] (1,-1) -- (1,1);
\draw[thick] (1,-1) -- (-1,-1);
\draw[thick] (-1,1) -- (1,-1);
\node[left] at (-1,1) {$x_1$};
\node[right] at (1,1) {$x_2$};
\node[left] at (-1,-1) {$x_4$};
\node[right] at (1,-1) {$x_3$};
\end{tikzpicture}
\end{center}
\textit{Proof.} 
The interior faces of $B_{4,b}$ are the two bounded $3$-faces in the figure above. 
For $f\in F(G)$ adjacent to $x_1x_2$, $x_2x_3$, $x_3x_4$, or $x_1x_4$ on the outside, we know it has length at least $4$ by Remark \ref{adj}.
Therefore, $f_{B_{4,b}}\leq 2+\frac{1}{4}\cdot 4$, $e_{B_{4,b}}=5$, and $g(B_{4,b})\leq -5$.

% We only need to calculate the contribution of the case with the largest face contribution, since all other cases will have a smaller contribution. This case has two adjacent faces with 4 sides, where $x_2x_1x_4$ and $x_2x_3x_4$ are part of separate exterior faces. Then, $f_{B_{4,b}}\leq 2+4/4$ and $g(B_{4,b})\leq -5$.
\subsubsection{$g(B_{5,a}) \leq 0.$}
\begin{center}
\begin{tikzpicture}
\draw[fill=gray!42] (1.5,0) -- (0,1.7) -- (0,0.87);
\filldraw (-1.5,0) circle (0.07);
\filldraw (1.5,0) circle (0.07);
\filldraw (0,2.6) circle (0.07);
\filldraw (0,0.87) circle (0.07);
\filldraw (0,1.7) circle (0.07);
\draw[thick] (-1.5,0) -- (1.5,0);
\draw[thick] (-1.5,0) -- (0,2.6);
\draw[thick] (0,2.6) -- (1.5,0);
\draw[thick] (-1.5,0) -- (0,0.87);
\draw[thick] (1.5,0) -- (0,0.87);
\draw[thick] (0,2.6) -- (0,0.87);
\draw[thick] (-1.5,0) -- (0,1.7);
\draw[thick] (1.5,0) -- (0,1.7);
\draw[thick] (0,2.6) -- (0,1.7);
\node[above] at (0,2.6) {$x_1$};
\node[left] at (-1.5,0) {$x_2$};
\node[right] at (1.5,0) {$x_3$};
\node[right] at (0,1.64) {$x_4$};
\node[below] at (0,0.87) {$x_5$};
\end{tikzpicture}
\end{center}
\textit{Proof.} 
For this block, the set of interior faces has two possibilities: 
it contains all five bounded $3$-faces or the four unshaded, bounded $3$-faces as shown in the figure above.

Suppose the interior faces are all five bounded $3$-faces. 
Pick $f\in F(G)$ adjacent to $B_{5,a}$ on the outside. 
By Remark \ref{adj}, $f$ has length at least $4$.
Similar to the case of $B_{4,a}$, there are four ways to have an adjacent $4$-face as shown in the figure below, and all of them are not $\Theta_6^1$-free.
Hence, $f$ has length at least $5$, and this argument holds for $x_1x_2$, $x_1x_3$, and $x_2x_3$ simultaneously.
Therefore, in this case, the contribution to the face number is at most $5+\frac{1}{5}\cdot 3=\frac{28}{5}$.

\begin{center}
\begin{tabular}{c c c c}
\begin{tikzpicture}[scale=0.8]
\draw[thick,red] (-1.5,0) -- (-2.366,0.5);
\draw[thick,red] (-0.866,3.1) -- (-2.366,0.5);
\draw[thick,red] (-0.866,3.1) -- (0,2.6);
\draw[thick] (-1.5,0) -- (1.5,0);
\draw[thick] (-1.5,0) -- (0,2.6);
\draw[thick] (0,2.6) -- (1.5,0);
\draw[thick] (-1.5,0) -- (0,0.87);
\draw[thick] (1.5,0) -- (0,0.87);
\draw[thick] (0,2.6) -- (0,0.87);
\draw[thick] (-1.5,0) -- (0,1.7);
\draw[thick] (1.5,0) -- (0,1.7);
\draw[thick] (0,2.6) -- (0,1.7);
\filldraw (-1.5,0) circle (0.07);
\filldraw (1.5,0) circle (0.07);
\filldraw (0,2.6) circle (0.07);
\filldraw (0,0.87) circle (0.07);
\filldraw (0,1.7) circle (0.07);
\filldraw (-2.366,0.5) circle (0.07);
\filldraw (-0.866,3.1) circle (0.07);
\end{tikzpicture}
&
\begin{tikzpicture}[scale=0.8]
\draw[thick,red] (-1.5,0) -- (-1.5,-1);
\draw[thick,red] (1.5,-1) -- (-1.5,-1);
\draw[thick,red] (1.5,-1) -- (1.5,0);
\draw[thick] (-1.5,0) -- (1.5,0);
\draw[thick] (-1.5,0) -- (0,2.6);
\draw[thick] (0,2.6) -- (1.5,0);
\draw[thick] (-1.5,0) -- (0,0.87);
\draw[thick] (1.5,0) -- (0,0.87);
\draw[thick] (0,2.6) -- (0,0.87);
\draw[thick] (-1.5,0) -- (0,1.7);
\draw[thick] (1.5,0) -- (0,1.7);
\draw[thick] (0,2.6) -- (0,1.7);
\filldraw (-1.5,0) circle (0.07);
\filldraw (1.5,0) circle (0.07);
\filldraw (0,2.6) circle (0.07);
\filldraw (0,0.87) circle (0.07);
\filldraw (0,1.7) circle (0.07);
\filldraw (-1.5,-1) circle (0.07);
\filldraw (1.5,-1) circle (0.07);
\end{tikzpicture}
&
\begin{tikzpicture}[scale=0.8]
\draw[thick,red] (0,3.4) -- (-1.5,0);
\draw[thick,red] (0,3.4) -- (1.5,0);
\draw[thick] (-1.5,0) -- (1.5,0);
\draw[thick] (-1.5,0) -- (0,2.6);
\draw[thick] (0,2.6) -- (1.5,0);
\draw[thick] (-1.5,0) -- (0,0.87);
\draw[thick] (1.5,0) -- (0,0.87);
\draw[thick] (0,2.6) -- (0,0.87);
\draw[thick] (-1.5,0) -- (0,1.7);
\draw[thick] (1.5,0) -- (0,1.7);
\draw[thick] (0,2.6) -- (0,1.7);
\filldraw (-1.5,0) circle (0.07);
\filldraw (1.5,0) circle (0.07);
\filldraw (0,2.6) circle (0.07);
\filldraw (0,0.87) circle (0.07);
\filldraw (0,1.7) circle (0.07);
\filldraw (0,3.4) circle (0.07);
\end{tikzpicture}
&
\begin{tikzpicture}[scale=0.8]
\draw[thick,red] (2.366,-0.5) -- (-1.5,0);
\draw[thick,red] (2.366,-0.5) -- (0,2.6);
\draw[thick] (-1.5,0) -- (1.5,0);
\draw[thick] (-1.5,0) -- (0,2.6);
\draw[thick] (0,2.6) -- (1.5,0);
\draw[thick] (-1.5,0) -- (0,0.87);
\draw[thick] (1.5,0) -- (0,0.87);
\draw[thick] (0,2.6) -- (0,0.87);
\draw[thick] (-1.5,0) -- (0,1.7);
\draw[thick] (1.5,0) -- (0,1.7);
\draw[thick] (0,2.6) -- (0,1.7);
\filldraw (-1.5,0) circle (0.07);
\filldraw (1.5,0) circle (0.07);
\filldraw (0,2.6) circle (0.07);
\filldraw (0,0.87) circle (0.07);
\filldraw (0,1.7) circle (0.07);
\filldraw (0,1.7) circle (0.07);
\filldraw (2.366,-0.5) circle (0.07);
\end{tikzpicture}
\end{tabular}
\end{center}

Suppose the interior faces are the four unshaded, bounded $3$-faces.
For any $f\in F(G)$ adjacent to $B_{5,a}$ on the outside, the argument in the previous paragraph still works, so such $f$ has length at least $5$.
For $f'\in F(G)$ adjacent to $x_3x_4$, $x_3x_5$, or $x_4x_5$ in the shaded region, we know it has length at least $4$ by Remark \ref{adj}.
Therefore, in this case, the contribution to the face number is at most $4+\frac{1}{5}\cdot 3+\frac{1}{4}\cdot 3=\frac{107}{20}$.

Since the first case has strictly larger contribution to the face number, we have $f_{B_{5,a}}\leq\frac{28}{5}$, $e_{B_{5,a}}=9$, and $g(B_{5,a})\leq 0$.
Notice that the equality is achieved when all bounded $3$-faces are interior faces, and when every face adjacent to $B_{5,a}$ on the outside is a $5$-face.

\subsubsection{$g(B_{5,b})\leq -8.$}
\begin{center}
\begin{tikzpicture}
\filldraw (-1,1) circle (0.07);
\filldraw (1,1) circle (0.07);
\filldraw (1,-1) circle (0.07);
\filldraw (-1,-1) circle (0.07);
\filldraw (0,0) circle (0.07);
\draw[thick] (-1,1) -- (1,1);
\draw[thick] (-1,1) -- (-1,-1);
\draw[thick] (1,-1) -- (1,1);
\draw[thick] (1,-1) -- (-1,-1);
\draw[thick] (1,1) -- (0,0);
\draw[thick] (-1,-1) -- (0,0);
\draw[thick] (-1,1) -- (0,0);
\draw[thick] (1,-1) -- (0,0);
\node[left] at (-1,1) {$x_1$};
\node[right] at (1,1) {$x_2$};
\node[left] at (-1,-1) {$x_4$};
\node[right] at (1,-1) {$x_3$};
\node[right] at (0,0) {$x_5$};
\end{tikzpicture}
\end{center}
\textit{Proof.} 
The interior faces of $B_{5,b}$ are the four bounded $3$-faces in the figure above. 
Pick $f\in F(G)$ adjacent to $x_1x_2$ on the outside. 
By Remark \ref{adj}, $f$ has length at least $4$.
There are two ways to have an adjacent $4$-face as shown in the figure below, and both of them are not $\Theta_6^1$-free.
Hence, $f$ has length at least $5$, and this argument holds for $x_2x_3$, $x_3x_4$, and $x_1x_4$ as well.
Therefore, $f_{B_{5,b}}\leq 4+\frac{1}{5}\cdot 4$, $e_{B_{5,b}}=8$, and $g(B_{5,b})\leq -8$.
\begin{center}
\begin{tabular}{c c}
\begin{tikzpicture}[scale=0.8]
\draw[thick,red] (1,2) -- (1,1);
\draw[thick,red] (1,2) -- (-1,2);
\draw[thick,red] (-1,1) -- (-1,2);
\draw[thick] (-1,1) -- (1,1);
\draw[thick] (-1,1) -- (-1,-1);
\draw[thick] (1,-1) -- (1,1);
\draw[thick] (1,-1) -- (-1,-1);
\draw[thick] (1,1) -- (0,0);
\draw[thick] (-1,-1) -- (0,0);
\draw[thick] (-1,1) -- (0,0);
\draw[thick] (1,-1) -- (0,0);
\filldraw (-1,1) circle (0.07);
\filldraw (1,1) circle (0.07);
\filldraw (1,-1) circle (0.07);
\filldraw (-1,-1) circle (0.07);
\filldraw (0,0) circle (0.07);
\filldraw (1,2) circle (0.07);
\filldraw (-1,2) circle (0.07);
\end{tikzpicture}
&
\hspace{1cm}
\begin{tikzpicture}[scale=0.8]
\draw[thick,red] (2,2) -- (1,-1);
\draw[thick,red] (2,2) -- (-1,1);
\draw[thick] (-1,1) -- (1,1);
\draw[thick] (-1,1) -- (-1,-1);
\draw[thick] (1,-1) -- (1,1);
\draw[thick] (1,-1) -- (-1,-1);
\draw[thick] (1,1) -- (0,0);
\draw[thick] (-1,-1) -- (0,0);
\draw[thick] (-1,1) -- (0,0);
\draw[thick] (1,-1) -- (0,0);
\filldraw (-1,1) circle (0.07);
\filldraw (1,1) circle (0.07);
\filldraw (1,-1) circle (0.07);
\filldraw (-1,-1) circle (0.07);
\filldraw (0,0) circle (0.07);
\filldraw (2,2) circle (0.07);
\end{tikzpicture}
\end{tabular}
\end{center}

\subsubsection{$g(B_{5,d})\leq  -\frac{19}{4}.$}
\begin{center}
\begin{tikzpicture}
\filldraw (0,1.5) circle (0.07);
\filldraw (0.951*1.5,0.309*1.5) circle (0.07);
\filldraw (-0.951*1.5,0.309*1.5) circle (0.07);
\filldraw (0.588*1.5,-0.809*1.5) circle (0.07);
\filldraw (-0.588*1.5,-0.809*1.5) circle (0.07);
\draw[thick] (0.951*1.5,0.309*1.5) -- (0,1.5);
\draw[thick] (0.951*1.5,0.309*1.5) -- (0.588*1.5,-0.809*1.5);
\draw[thick] (-0.588*1.5,-0.809*1.5) -- (0.588*1.5,-0.809*1.5);
\draw[thick] (-0.588*1.5,-0.809*1.5) -- (-0.951*1.5,0.309*1.5);
\draw[thick] (0,1.5) -- (-0.951*1.5,0.309*1.5);
\draw[thick] (0,1.5) -- (0.588*1.5,-0.809*1.5);
\draw[thick] (0,1.5) -- (-0.588*1.5,-0.809*1.5);
\node[above] at (0,1.5) {$x_1$};
\node[right] at (0.951*1.5,0.309*1.5) {$x_2$};
\node[right] at (0.588*1.5,-0.809*1.5) {$x_3$};
\node[left] at (-0.588*1.5,-0.809*1.5) {$x_4$};
\node[left] at (-0.951*1.5,0.309*1.5) {$x_5$};
\end{tikzpicture}
\end{center}

\textit{Proof.} 
The interior faces of $B_{5,d}$ are the three bounded $3$-faces in the figure above. 
For $f\in F(G)$ adjacent to $x_1x_2$, $x_2x_3$, $x_3x_4$, $x_4x_5$, or $x_1x_5$ on the outside, we know it has length at least $4$ by Remark \ref{adj}.
Therefore, $f_{B_{5,d}}\leq 3+\frac{1}{4}\cdot 5$, $e_{B_{5,d}}=7$, and $g(B_{5,d})\leq -\frac{19}{4}$.

\subsubsection{$g(B_6)\leq -\frac{9}{2}.$}
\begin{center}
\begin{tikzpicture}
\filldraw (0,1.5) circle (0.07);
\filldraw (0,-1.5) circle (0.07);
\filldraw (-1.3,0.75) circle (0.07);
\filldraw (1.3,0.75) circle (0.07);
\filldraw (-1.3,-0.75) circle (0.07);
\filldraw (1.3,-0.75) circle (0.07);
\draw[thick] (0,1.5) -- (-1.3,0.75);
\draw[thick] (-1.3,-0.75) -- (-1.3,0.75);
\draw[thick] (-1.3,-0.75) -- (0,-1.5);
\draw[thick] (0,1.5) -- (1.3,0.75);
\draw[thick] (1.3,-0.75) -- (1.3,0.75);
\draw[thick] (1.3,-0.75) -- (0,-1.5);
\draw[thick] (0,1.5) -- (-1.3,-0.75);
\draw[thick] (-1.3,-0.75) -- (1.3,-0.75);
\draw[thick] (1.3,-0.75) -- (0,1.5);
\node[above] at (0,1.5) {$x_1$};
\node[right] at (1.3,0.75) {$x_2$};
\node[right] at (1.3,-0.75) {$x_3$};
\node[below] at (0,-1.5) {$x_4$};
\node[left] at (-1.3,-0.75) {$x_5$};
\node[left] at (-1.3,0.75) {$x_6$};
\end{tikzpicture}
\end{center}
\textit{Proof.} 
The interior faces of $B_{6}$ are the four bounded $3$-faces in the figure above. 
For $f\in F(G)$ adjacent to $x_1x_2$, $x_2x_3$, $x_3x_4$, $x_4x_5$, $x_5x_6$ or $x_1x_6$ on the outside, we know it has length at least $4$ by Remark \ref{adj}.
Therefore, $f_{B_{6}}\leq 4+\frac{1}{4}\cdot 6$, $e_{B_{4,b}}=9$, and $g(B_{4,b})\leq -\frac{9}{2}$.

% Notice that the exterior faces joining each of the exterior edge $x_1x_2$, $x_2x_3$, $x_3x_4$, $x_4x_5$, $x_5x_6$ and $x_1x_6$ must be at least a $4$-face; otherwise, there is a new exterior $3$-face joining $B$, contradicting the fact that $B = B_6$ cannot be extended to a larger triangular block. Since $B$ has $4$ interior faces, so $\hat{f}(B) \leq 4 + 6 \cdot \frac{1}{4} = \frac{11}{2}$. Since $e(B) = 9$, so $45\hat{f}(B) - 28e(B) \leq 45 \cdot \frac{11}{2} - 28 \cdot 9 = -\frac{9}{2} < 0$.

\subsubsection{Troublemaker $B_{5,c}$}\label{troublemaker_Theta_61}
\begin{center}
\begin{tikzpicture}
\filldraw (0,1) circle (0.07);
\filldraw (0,-1) circle (0.07);
\filldraw (-1.732,0) circle (0.07);
\filldraw (1.732,0) circle (0.07);
\filldraw (0.577,0) circle (0.07);
\draw[thick] (0,1) -- (0,-1);
\draw[thick] (0,1) -- (-1.732,0);
\draw[thick] (0,1) -- (1.732,0);
\draw[thick] (0,-1) -- (-1.732,0);
\draw[thick] (0,-1) -- (1.732,0);
\draw[thick] (0,1) -- (0.577,0);
\draw[thick] (0,-1) -- (0.577,0);
\draw[thick] (1.732,0) -- (0.577,0);
\node[above] at (0,1) {$x_1$};
\node[right] at (1.732,0) {$x_2$};
\node[below] at (0,-1) {$x_3$};
\node[left] at (-1.732,0) {$x_4$};
\node[below] at (0.7,0) {$x_5$};
\end{tikzpicture}
\end{center}
The interior faces of $B_{5,c}$ are the four bounded $3$-faces in the figure above. 
By Remark \ref{adj}, every $f\in F(G)$ adjacent to $B_{5,c}$ on the outside has length at least $4$.
Unfortunately, this estimation is not enough to show $g(B_{5,c})
<0$, so we will use clusters to solve this issue.

% Let $B$ be a $B_{5,c}$ consisting of five vertices $x_1$, $x_2$, $x_3$, $x_4$ and $x_5$ and eight edges $x_1x_2$, $x_2x_3$, $x_3x_4$, $x_1x_4$, $x_1x_3$, $x_1x_5$, $x_2x_5$ and $x_3x_5$. Notice that the exterior face joining each of $x_1x_2$, $x_2x_3$, $x_3x_4$ and $x_1x_4$ must be at least a $4$-face (otherwise we can build a larger triangular block based on $B$, which is a contradiction). So each edge would contribute at most $\frac{1}{4}$ to the overall $\hat{f}(B)$. Since $B$ has four interior faces, so $\hat{f}(B) \leq 4 + 4 \cdot \frac{1}{4} = 5$. Since $e(B) = 8$, so $45\hat{f}(B) - 28e(B) \leq 45 \cdot 5 - 28 \cdot 8 = 1$.\\ \\ 
% The inequality above shows that the block $B_{5,c}$ may have a positive contribution, so we need to discuss further to obtain the desired results. There are two possibilities for the structure of exterior faces of $B$ that we need to discuss individually, as followed.
\textbf{Case 1:}
Suppose some $f\in F(G)$ adjacent to $B_{5,c}$ on the outside has length at least $5$.
In this case, $f_{B_{5,c}}\leq 4+\frac{1}{4}\cdot 3+\frac{1}{5}$, $e_{B_{5,c}}=8$, and $g(B_{5,c})\leq -\frac{5}{4}$.

% This implies that there is one exterior face of size at least $5$ joining at least one of the four exterior edges of $B$. Since $B$ has four interior faces, so $\hat{f}(B) \leq 4 + 3 \cdot \frac{1}{4} + \frac{1}{5} = \frac{99}{20}$. Clearly $e(B) = 8$, so $45\hat{f}(B) - 28e(B) \leq 45 \cdot \frac{99}{20} - 28 \cdot 8 = -\frac{5}{4} < 0$.

\textbf{Case 2:}
Suppose every $f\in F(G)$ adjacent to $B_{5,c}$ on the outside is a $4$-face.
We will show that the additional edges constitute these $4$-faces must be placed in a unique way and that they induce trivial triangular blocks.
Let $f\in F(G)$ be the $4$-face adjacent to $x_1x_2$ on the outside.
There are three possibilities as shown in the figure below.
    \begin{center}
    \begin{tabular}{c c c}
    \begin{tikzpicture}[scale=0.8]
    \draw[thick] (0,1) -- (0,-1);
    \draw[thick] (0,1) -- (-1.732,0);
    \draw[thick] (0,1) -- (1.732,0);
    \draw[thick] (0,-1) -- (-1.732,0);
    \draw[thick] (0,-1) -- (1.732,0);
    \draw[thick] (0,1) -- (0.577,0);
    \draw[thick] (0,-1) -- (0.577,0);
    \draw[thick] (1.732,0) -- (0.577,0);
    \draw[thick, red] (0,1) -- (0.5,1.866);
    \draw[thick, red] (0.5,1.866) -- (2.232,0.866);
    \draw[thick, red] (2.232,0.866) -- (1.732,0);
    \filldraw (0.5,1.866) circle (0.07);
    \filldraw (2.232,0.866) circle (0.07);
    \filldraw (0,1) circle (0.07);
    \filldraw (0,-1) circle (0.07);
    \filldraw (-1.732,0) circle (0.07);
    \filldraw (1.732,0) circle (0.07);
    \filldraw (0.577,0) circle (0.07);
    \end{tikzpicture}
    &
    \begin{tikzpicture}[scale=0.8]
    \draw[thick] (0,1) -- (0,-1);
    \draw[thick] (0,1) -- (-1.732,0);
    \draw[thick] (0,1) -- (1.732,0);
    \draw[thick] (0,-1) -- (-1.732,0);
    \draw[thick] (0,-1) -- (1.732,0);
    \draw[thick] (0,1) -- (0.577,0);
    \draw[thick] (0,-1) -- (0.577,0);
    \draw[thick] (1.732,0) -- (0.577,0);
    \draw[thick, red] (0,2) -- (1.732,0);
    \draw[thick, red] (0,2) -- (-1.732,0);
    \filldraw (0,2) circle (0.07);
    \filldraw (0,1) circle (0.07);
    \filldraw (0,-1) circle (0.07);
    \filldraw (-1.732,0) circle (0.07);
    \filldraw (1.732,0) circle (0.07);
    \filldraw (0.577,0) circle (0.07);
    \end{tikzpicture}
    & 
    \begin{tikzpicture}[scale=0.8]
    \draw[thick] (0,1) -- (0,-1);
    \draw[thick] (0,1) -- (-1.732,0);
    \draw[thick] (0,1) -- (1.732,0);
    \draw[thick] (0,-1) -- (-1.732,0);
    \draw[thick] (0,-1) -- (1.732,0);
    \draw[thick] (0,1) -- (0.577,0);
    \draw[thick] (0,-1) -- (0.577,0);
    \draw[thick] (1.732,0) -- (0.577,0);
    \draw[thick, red] (0,1) -- (3,0);
    \draw[thick, red] (0,-1) -- (3,0);
    \filldraw (3,0) circle (0.07);
    \filldraw (0,1) circle (0.07);
    \filldraw (0,-1) circle (0.07);
    \filldraw (-1.732,0) circle (0.07);
    \filldraw (1.732,0) circle (0.07);
    \filldraw (0.577,0) circle (0.07);
    \end{tikzpicture}
    \end{tabular}
    \end{center}
Since the first two configurations are not $\Theta_6^1$-free, the $4$-face of $G$ adjacent to $x_1x_2$ or $x_2x_3$ on the outside must be placed as in the third configuration.
One can work out a similar argument for $x_1x_4$ or $x_3x_4$ and conclude that if every $f\in F(G)$ adjacent to $B_{5,c}$ on the outside is a $4$-face, then those $4$-faces must be placed as shown below: 
\begin{center}
\begin{tikzpicture}
\draw[thick] (0,1) -- (0,-1);
\draw[thick] (0,1) -- (-1.732,0);
\draw[thick] (0,1) -- (1.732,0);
\draw[thick] (0,-1) -- (-1.732,0);
\draw[thick] (0,-1) -- (1.732,0);
\draw[thick] (0,1) -- (0.577,0);
\draw[thick] (0,-1) -- (0.577,0);
\draw[thick] (1.732,0) -- (0.577,0);
\draw[thick, red] (0,1) -- (-3,0);
\draw[thick, red] (0,-1) -- (-3,0);
\filldraw (-3,0) circle (0.07);
\draw[thick, red] (0,1) -- (3,0);
\draw[thick, red] (0,-1) -- (3,0);
\filldraw (3,0) circle (0.07);
\node[above] at (0,1) {$x_1$};
\node[right] at (1.732,0) {$x_2$};
\node[below] at (0,-1) {$x_3$};
\node[left] at (-1.732,0) {$x_4$};
\node[below] at (0.7,0) {$x_5$};
\node[right] at (3,0) {$w$};
\node[left] at (-3,0) {$v$};
\filldraw (0,1) circle (0.07);
\filldraw (0,-1) circle (0.07);
\filldraw (-1.732,0) circle (0.07);
\filldraw (1.732,0) circle (0.07);
\filldraw (0.577,0) circle (0.07);
\end{tikzpicture}
\end{center}

If $x_1v$ is adjacent to a $3$-cycle (in particular, $3$-face) on the outside, then there are two possibilities as shown in the figure below, and both of them are not $\Theta_6^1$-free. 
Hence, $x_1v$ together with its endpoints forms a trivial triangular block.
The same argument works for $x_3v$, $x_1w$, and $x_3w$.
Since these five blocks always appear together, we define a cluster $\overline{B}$ to be the union of one triangular block $B_{5,c}$ and four trivial triangular blocks $B_2$ who share common $4$-faces with $B_{5,c}$.
Naturally, these $B_2$ blocks will not be considered as clusters individually anymore.
\begin{center}
\begin{tabular}{c c}
\begin{tikzpicture}[scale=0.8]
\draw[thick,blue] (-3,0) .. controls (-1,2) and (1,2) .. (3,0);
\draw[thick] (0,1) -- (0,-1);
\draw[thick] (0,1) -- (0,-1);
\draw[thick] (0,1) -- (-1.732,0);
\draw[thick] (0,1) -- (1.732,0);
\draw[thick] (0,-1) -- (-1.732,0);
\draw[thick] (0,-1) -- (1.732,0);
\draw[thick] (0,1) -- (0.577,0);
\draw[thick] (0,-1) -- (0.577,0);
\draw[thick] (1.732,0) -- (0.577,0);
\draw[thick, red] (0,1) -- (-3,0);
\draw[thick, red] (0,-1) -- (-3,0);
\filldraw (-3,0) circle (0.07);
\draw[thick, red] (0,1) -- (3,0);
\draw[thick, red] (0,-1) -- (3,0);
\filldraw (3,0) circle (0.07);
\filldraw (0,1) circle (0.07);
\filldraw (0,-1) circle (0.07);
\filldraw (-1.732,0) circle (0.07);
\filldraw (1.732,0) circle (0.07);
\filldraw (0.577,0) circle (0.07);
\end{tikzpicture}
& 
\begin{tikzpicture}[scale=0.8]
\draw[thick,blue] (1.9,1.5) -- (3,0);
\draw[thick,blue] (1.9,1.5) -- (0,1);
\draw[thick] (0,1) -- (0,-1);
\draw[thick] (0,1) -- (-1.732,0);
\draw[thick] (0,1) -- (1.732,0);
\draw[thick] (0,-1) -- (-1.732,0);
\draw[thick] (0,-1) -- (1.732,0);
\draw[thick] (0,1) -- (0.577,0);
\draw[thick] (0,-1) -- (0.577,0);
\draw[thick] (1.732,0) -- (0.577,0);
\draw[thick, red] (0,1) -- (-3,0);
\draw[thick, red] (0,-1) -- (-3,0);
\filldraw (-3,0) circle (0.07);
\draw[thick, red] (0,1) -- (3,0);
\draw[thick, red] (0,-1) -- (3,0);
\filldraw (3,0) circle (0.07);
\filldraw (0,1) circle (0.07);
\filldraw (0,-1) circle (0.07);
\filldraw (-1.732,0) circle (0.07);
\filldraw (1.732,0) circle (0.07);
\filldraw (0.577,0) circle (0.07);
\filldraw (1.9,1.5) circle (0.07);
\end{tikzpicture}
\end{tabular}
\end{center}

Let's calculate the contribution by $\overline{B}$.
Since $x_1x_4x_3v$ and $x_1x_2x_3w$ are $4$-faces of $G$, we only need to discuss $f\in F(G)$ adjacent to $x_1v$, $x_3v$, $x_1w$, or $x_3w$ on the outside.
By Remark \ref{adj}, such $f$ has length at least $4$.
Therefore, in this case, $f_{\overline{B}}\leq 6+\frac{1}{4}\cdot 4$, $e_{\overline{B}}=12$, and $g(\overline{B})\leq-21$.

% The arguments and the results for the cases \ref{theta61B5ccase2} and \ref{theta61B5ccase3} are completely the same as case \ref{theta61B5ccase1}. Notice that these are the only cases where we need to calculate the contribution of a cluster of multiple triangular blocks throughout our entire proof, so we can avoid over-counting issue and finish our proof as long as we show that all of these clusters are either the same or edge-disjoint.

To sum up, we partition $\mathcal{B}$ in the following way:
\begin{enumerate}
    \item If all the non-interior faces adjacent to some $B_{5,c}$ block have length $4$, then this $B_{5,c}$ together with the $4$ trivial triangular blocks on its outside form a cluster $\overline{B}$.
    \item Every remaining triangular block in $G$ is a cluster by itself.
\end{enumerate}
To complete the proof, we are left to check that our partition is well-defined, 
that is, we must show that any two $\overline{B}$ clusters are disjoint.
Notice that any other pair of clusters is disjoint by construction.

For the sake of contradiction, suppose two distinct $\overline{B}$ clusters intersect, denoted by $\overline{B_1}$ and $\overline{B_2}$.
If they share a common $B_{5,c}$ block, then, by construction, they are forced to be the same cluster.
Thus, $\overline{B_1}$ and $\overline{B_2}$ must share a common $B_2$ block, denoted by $B_2'$.
Notice that $\overline{B_2}$ has a $3$-cycle which contains $B_2'$ and the diagonal of the $B_{5,c}$ block of $\overline{B_2}$.
Since the $B_{5,c}$ blocks of $\overline{B_1}$ and $\overline{B_2}$ are edge-disjoint, such a $3$-cycle cannot be simultaneously contained in $\overline{B_1}$.
In short, there is a $3$-cycle containing $B_2'$ that is not contained in $\overline{B_1}$, i.e. a $3$-cycle adjacent to $\overline{B_1}$ on the outside of the edge $B_2'$. However, this is a contradiction by the same argument of showing that $x_1v$ induces a trivial triangular block.

We have shown that every cluster has contribution strictly less that zero except for $B_{5,a}$, which may have exactly zero contribution.
By Lemma \ref{tri-b}, we prove the upper bound $ex_{\mathcal{P}}(n,\Theta_6^1)\leq\frac{45}{17}(n-2)$.
It also implies that the equality is achieved only if every triangular block is $B_{5,a}$ adjacent to $5$-faces, as we expected from the extremal constructions.

\section{Proof of Theorem \ref{theta6-2}}
\subsection{Proof of Lower Bound}
Since $\Theta^2_6$ is a particular type of $\Theta_6$, we know a $\Theta_6$-free graph is obviously $\Theta_6^2$-free.
Then this subsection is a corollary of Theorem 5 in \cite{theta6}, which presents an infinite family of $\Theta_6$-free planar graphs satisfying the lower bound.

\subsection{Proof of Upper Bound}
By Proposition \ref{8blocks}, there are only $8$ triangular blocks that are $\Theta^2_6$-free. We can use the same method as in Section \ref{theta6-1} to get the table of contributions below.
\begin{center}
\begin{tabular}{ |c| c | c| c | c| } 
  \hline
  $B$ &Diagram & $f_B \leq$ & $e_B$ & $g(B)=18f_B-11e_B \leq $  \\
  \hline
  \begin{tabular}{l}
       $B_2$
  \end{tabular} & 
  \begin{tabular}{l} \begin{tikzpicture}
    \filldraw (-1,0) circle (0.07);
    \filldraw (1,0) circle (0.07);
    \draw[thick] (-1,0) -- (1,0);
    \end{tikzpicture} \end{tabular} & $\frac{1}{4}\cdot 2$ & $1$ & $-2$ \\
    
  \hline
  \begin{tabular}{l}
       $B_3$
  \end{tabular} &
  \begin{tabular}{l}
  \begin{tikzpicture}
    \filldraw (-1,0) circle (0.07);
    \filldraw (1,0) circle (0.07);
    \filldraw (0,1.732) circle (0.07);
    \draw[thick] (-1,0) -- (1,0);
    \draw[thick] (-1,0) -- (0,1.732);
    \draw[thick] (0,1.732) -- (1,0);
    \end{tikzpicture} \end{tabular} & $1+\frac{1}{4}\cdot 3$ & $3$ & $-\frac{3}{2}$ \\ 
    
  \hline
  \begin{tabular}{l} $B_{4,a}$ \end{tabular}& 
     \begin{tabular}{l}
  \begin{tikzpicture}
    \filldraw (-1,0) circle (0.07);
    \filldraw (1,0) circle (0.07);
    \filldraw (0,1.732) circle (0.07);
    \filldraw (0,0.577) circle (0.07);
    \draw[thick] (-1,0) -- (1,0);
    \draw[thick] (-1,0) -- (0,1.732);
    \draw[thick] (0,1.732) -- (1,0);
    \draw[thick] (-1,0) -- (0,0.577);
    \draw[thick] (1,0) -- (0,0.577);
    \draw[thick] (0,1.732) -- (0,0.577);
    \end{tikzpicture} \end{tabular}& $3+\frac{1}{4}\cdot 2+\frac{1}{6}$& $6$ & \textcolor{red}{$0$} \\
    
    \hline
    \begin{tabular}{l} $B_{4,b}$ \end{tabular}& 
     \begin{tabular}{l}
    \begin{tikzpicture}
    \filldraw (-1,1) circle (0.07);
    \filldraw (1,1) circle (0.07);
    \filldraw (1,-1) circle (0.07);
    \filldraw (-1,-1) circle (0.07);
    \draw[thick] (-1,1) -- (1,1);
    \draw[thick] (-1,1) -- (-1,-1);
    \draw[thick] (1,-1) -- (1,1);
    \draw[thick] (1,-1) -- (-1,-1);
    \draw[thick] (-1,1) -- (1,-1);
    \end{tikzpicture} \end{tabular} & $2+\frac{1}{4}\cdot 4$ & $5$ & $-1$ \\
    
    \hline
    \begin{tabular}{l} $B_{5,a}$ \end{tabular}& 
     \begin{tabular}{l}
    \begin{tikzpicture}
    \filldraw (-1.5,0) circle (0.07);
    \filldraw (1.5,0) circle (0.07);
    \filldraw (0,2.6) circle (0.07);
    \filldraw (0,0.87) circle (0.07);
    \filldraw (0,1.7) circle (0.07);
    \draw[thick] (-1.5,0) -- (1.5,0);
    \draw[thick] (-1.5,0) -- (0,2.6);
    \draw[thick] (0,2.6) -- (1.5,0);
    \draw[thick] (-1.5,0) -- (0,0.87);
    \draw[thick] (1.5,0) -- (0,0.87);
    \draw[thick] (0,2.6) -- (0,0.87);
    \draw[thick] (-1.5,0) -- (0,1.7);
    \draw[thick] (1.5,0) -- (0,1.7);
    \draw[thick] (0,2.6) -- (0,1.7);
    \end{tikzpicture} \end{tabular}& $5+\frac{1}{6}\cdot 3$&$9$ & \textcolor{red}{0}\\

    \hline
    \begin{tabular}{l} $B_{5,b}$ \end{tabular}& 
     \begin{tabular}{l}\begin{tikzpicture}
    \filldraw (-1,1) circle (0.07);
    \filldraw (1,1) circle (0.07);
    \filldraw (1,-1) circle (0.07);
    \filldraw (-1,-1) circle (0.07);
    \filldraw (0,0) circle (0.07);
    \draw[thick] (-1,1) -- (1,1);
    \draw[thick] (-1,1) -- (-1,-1);
    \draw[thick] (1,-1) -- (1,1);
    \draw[thick] (1,-1) -- (-1,-1);
    \draw[thick] (1,1) -- (0,0);
    \draw[thick] (-1,-1) -- (0,0);
    \draw[thick] (-1,1) -- (0,0);
    \draw[thick] (1,-1) -- (0,0);
    \end{tikzpicture}  \end{tabular} & $4+\frac{1}{5}\cdot 4$&8 & $-\frac{8}{5}$\\

    \hline
    \begin{tabular}{l} $B_{5,c}$ \end{tabular}& 
     \begin{tabular}{l}
    \begin{tikzpicture}
    \filldraw (0,1) circle (0.07);
    \filldraw (0,-1) circle (0.07);
    \filldraw (-1.732,0) circle (0.07);
    \filldraw (1.732,0) circle (0.07);
    \filldraw (0.577,0) circle (0.07);
    \draw[thick] (0,1) -- (0,-1);
    \draw[thick] (0,1) -- (-1.732,0);
    \draw[thick] (0,1) -- (1.732,0);
    \draw[thick] (0,-1) -- (-1.732,0);
    \draw[thick] (0,-1) -- (1.732,0);
    \draw[thick] (0,1) -- (0.577,0);
    \draw[thick] (0,-1) -- (0.577,0);
    \draw[thick] (1.732,0) -- (0.577,0);
    \end{tikzpicture} \end{tabular} &$4+\frac{1}{4}\cdot 4$ &$8$ &  \makecell{\textcolor{red}{$2$} \\ \textcolor{red}{(more details in \ref{B5,c theta62})}}\\

    \hline
    \begin{tabular}{l} $B_{5,d}$ \end{tabular}& 
    \begin{tabular}{l}
    \begin{tikzpicture}
    \filldraw (0,1.5) circle (0.07);
    \filldraw (0.951*1.5,0.309*1.5) circle (0.07);
    \filldraw (-0.951*1.5,0.309*1.5) circle (0.07);
    \filldraw (0.588*1.5,-0.809*1.5) circle (0.07);
    \filldraw (-0.588*1.5,-0.809*1.5) circle (0.07);
    \draw[thick] (0.951*1.5,0.309*1.5) -- (0,1.5);
    \draw[thick] (0.951*1.5,0.309*1.5) -- (0.588*1.5,-0.809*1.5);
    \draw[thick] (-0.588*1.5,-0.809*1.5) -- (0.588*1.5,-0.809*1.5);
    \draw[thick] (-0.588*1.5,-0.809*1.5) -- (-0.951*1.5,0.309*1.5);
    \draw[thick] (0,1.5) -- (-0.951*1.5,0.309*1.5);
    \draw[thick] (0,1.5) -- (0.588*1.5,-0.809*1.5);
    \draw[thick] (0,1.5) -- (-0.588*1.5,-0.809*1.5);
    \end{tikzpicture}    \end{tabular} & $3+\frac{1}{4}\cdot 5$& $7$ & $-\frac{1}{2}$\\ 
    \hline
\end{tabular}
\end{center}
\subsubsection{Troublemaker $B_{5,c}$} \label{B5,c theta62}

\begin{center}
\begin{tikzpicture}
\filldraw (0,1) circle (0.07);
\filldraw (0,-1) circle (0.07);
\filldraw (-1.732,0) circle (0.07);
\filldraw (1.732,0) circle (0.07);
\filldraw (0.577,0) circle (0.07);
\draw[thick] (0,1) -- (0,-1);
\draw[thick] (0,1) -- (-1.732,0);
\draw[thick] (0,1) -- (1.732,0);
\draw[thick] (0,-1) -- (-1.732,0);
\draw[thick] (0,-1) -- (1.732,0);
\draw[thick] (0,1) -- (0.577,0);
\draw[thick] (0,-1) -- (0.577,0);
\draw[thick] (1.732,0) -- (0.577,0);
\node[above] at (0,1) {$x_1$};
\node[right] at (1.732,0) {$x_2$};
\node[below] at (0,-1) {$x_3$};
\node[left] at (-1.732,0) {$x_4$};
\node[below] at (0.7,0) {$x_5$};
\end{tikzpicture}
\end{center}

For simplicity, we call the four edges constituting the $4$-face of $B_{5,c}$, that is, $x_1x_2$, $x_2x_3$, $x_3x_4$, and $x_1x_4$ the \textbf{boundaries} of $B_{5,c}$. We first prove the following lemmas. 
\begin{lemma} \label{lemmaoffourfacesofspecialblockintheta62}
If a face adjacent to $B_{5,c}$ is a $4$-face, then it must be one of the following cases.
\end{lemma}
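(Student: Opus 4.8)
The plan is to enumerate all the ways a $4$-face can be attached to $B_{5,c}$ along its outer boundary $4$-cycle $x_1x_2x_3x_4$ and to eliminate every attachment except two by exhibiting a $\Theta_6^2$ inside $B_{5,c}$ together with that $4$-face. Throughout I would exploit the reflection of $B_{5,c}$ that fixes $x_2,x_4,x_5$ and interchanges $x_1$ and $x_3$.

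First I would record that the only edges of $B_{5,c}$ lying on its outer boundary are $x_1x_2$, $x_2x_3$, $x_3x_4$, $x_1x_4$, whereas the chord $x_1x_3$ and the spokes $x_1x_5$, $x_3x_5$, $x_2x_5$ are interior (they separate two of the four bounded $3$-faces). Consequently a $4$-face $f$ adjacent to $B_{5,c}$ must contain at least one boundary edge; since $f$ has four edges and cannot contain the interior edge $x_1x_3$, a short check shows that $f$ contains either exactly one boundary edge or exactly two consecutive boundary edges (two opposite boundary edges would force $f$ to be the $4$-cycle $x_1x_2x_3x_4$ or to use $x_1x_3$, both impossible). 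Up to the reflection this leaves five cases: $f$ shares only $x_1x_2$; $f$ shares only $x_1x_4$; $f$ wraps the corner $x_1$ (shares $x_1x_2$ and $x_1x_4$); $f$ wraps the corner $x_2$ (shares $x_1x_2$ and $x_2x_3$); $f$ wraps the corner $x_4$ (shares $x_1x_4$ and $x_3x_4$).

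The next step is to rule out the first three cases. If $f=x_1x_2ab$ shares only $x_1x_2$, with $a$ adjacent to $x_2$, $b$ adjacent to $x_1$ and $ab\in E(G)$: the sub-possibilities $b\in\{x_3,x_5\}$ and $a=x_5$ are excluded because the matching edge of $f$ would be interior to $B_{5,c}$, and $b=x_4$ falls under the corner-$x_1$ case; in the generic situation ($a,b$ outside $B_{5,c}$, distinct from $x_3$) the $6$-cycle $x_1bax_2x_3x_4$ has the chord $x_1x_3$ joining two vertices at distance $2$ through $x_4$, so the triangle $x_1x_4x_3$ and the pentagon $x_1x_3x_2ab$ form a $\Theta_6^2$; the single remaining sub-case $a=x_4$, i.e. $f=x_1x_2x_4b$ with the new edge $x_2x_4$, yields the $\Theta_6^2$ made of the triangle $x_2x_3x_4$ and the pentagon $x_2x_4bx_1x_5$. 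The case where $f$ shares only $x_1x_4$ is symmetric: generically one uses the $6$-cycle $x_1bax_4x_3x_2$ with chord $x_1x_3$, and the lone degenerate sub-case $f=x_1x_4x_2b$ (the new edge being $x_2x_4$) is killed by the $6$-cycle $x_1bx_2x_4x_3x_5$ with the chord $x_1x_2$ at distance $2$. If $f=x_2x_1x_4y$ wraps the corner $x_1$ (with $y$ outside $B_{5,c}$; one checks $y\notin\{x_3,x_5\}$), the $6$-cycle $x_3x_1x_4yx_2x_5$ carries the chord $x_1x_5$ at distance $2$ through $x_3$, giving the triangle $x_1x_5x_3$ and the pentagon $x_1x_4yx_2x_5$. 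In each instance I would verify that the six vertices involved are pairwise distinct.

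This leaves precisely the two configurations in the statement, namely $f=x_1x_2x_3y$ and $f=x_1x_4x_3y$ with $y$ outside $B_{5,c}$ and adjacent only to $x_1$ and $x_3$ (again $y\notin\{x_4,x_5\}$). To confirm these survive I would note that $B_{5,c}\cup f$ contains no $\Theta_6^2$: such a copy would have to use the degree-$2$ vertex $y$, hence contain the path $x_1yx_3$, hence contain a path of length $4$ from $x_1$ to $x_3$ inside $B_{5,c}$; but $x_4$ has only the two neighbours $x_1,x_3$ there and the remaining four vertices $x_1,x_2,x_3,x_5$ cannot carry a path of length $4$, so no such path exists. The step I expect to require the most care is keeping the enumeration airtight: arguing that a $4$-face cannot straddle two opposite boundary edges, and handling the degenerate collisions of the new vertices of $f$ with $x_3$ or $x_4$ (which either collapse to the corner cases or, as with $f=x_1x_2x_4b$ and $f=x_1x_4x_2b$, demand their own $\Theta_6^2$). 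The individual adjacency, planarity, and distinctness checks are then routine.
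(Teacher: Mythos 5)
Your proposal is correct and follows essentially the same route as the paper: enumerate the ways a $4$-face can attach along the outer $4$-cycle $x_1x_2x_3x_4$ and exhibit an explicit $\Theta_6^2$ (a triangle and a pentagon sharing an edge) in every configuration other than the two corner-wraps at $x_2$ and $x_4$. You are in fact more thorough than the paper, which only displays the three generic bad configurations and asserts they are not $\Theta_6^2$-free, whereas you also justify the enumeration, invoke the reflection symmetry explicitly, and handle the degenerate sub-cases where the new vertices of the $4$-face collide with vertices of $B_{5,c}$.
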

\begin{center}
\begin{tabular}{c c}
\begin{tikzpicture}
\draw[thick, red] (0,1) -- (3,0);
\draw[thick, red] (0,-1) -- (3,0);
\filldraw (0,1) circle (0.07);
\filldraw (0,-1) circle (0.07);
\filldraw (-1.732,0) circle (0.07);
\filldraw (1.732,0) circle (0.07);
\filldraw (0.577,0) circle (0.07);
\draw[thick] (0,1) -- (0,-1);
\draw[thick] (0,1) -- (-1.732,0);
\draw[thick] (0,1) -- (1.732,0);
\draw[thick] (0,-1) -- (-1.732,0);
\draw[thick] (0,-1) -- (1.732,0);
\draw[thick] (0,1) -- (0.577,0);
\draw[thick] (0,-1) -- (0.577,0);
\draw[thick] (1.732,0) -- (0.577,0);
\filldraw (3,0) circle (0.07);
\end{tikzpicture}
&
\begin{tikzpicture}
\draw[thick, red] (0,1) -- (-3,0);
\draw[thick, red] (0,-1) -- (-3,0);
\filldraw (0,1) circle (0.07);
\filldraw (0,-1) circle (0.07);
\filldraw (-1.732,0) circle (0.07);
\filldraw (1.732,0) circle (0.07);
\filldraw (0.577,0) circle (0.07);
\draw[thick] (0,1) -- (0,-1);
\draw[thick] (0,1) -- (-1.732,0);
\draw[thick] (0,1) -- (1.732,0);
\draw[thick] (0,-1) -- (-1.732,0);
\draw[thick] (0,-1) -- (1.732,0);
\draw[thick] (0,1) -- (0.577,0);
\draw[thick] (0,-1) -- (0.577,0);
\draw[thick] (1.732,0) -- (0.577,0);
\filldraw (-3,0) circle (0.07);
\end{tikzpicture}
\end{tabular}
\end{center}
\begin{proof}
    Other than the two cases listed in the lemma, we have the other possible cases of a $4$-face adjacent to a boundary of $B_{5,c}$ as follows. 
    \begin{center}
        \begin{tabular}{c c c}
        \begin{tikzpicture}[scale=0.8]
        \draw[thick] (0,1) -- (0,-1);
        \draw[thick] (0,1) -- (-1.732,0);
        \draw[thick] (0,1) -- (1.732,0);
        \draw[thick] (0,-1) -- (-1.732,0);
        \draw[thick] (0,-1) -- (1.732,0);
        \draw[thick] (0,1) -- (0.577,0);
        \draw[thick] (0,-1) -- (0.577,0);
        \draw[thick] (1.732,0) -- (0.577,0);
        \draw[thick, red] (0,1) -- (0.5,1.866);
        \draw[thick, red] (0.5,1.866) -- (2.232,0.866);
        \draw[thick, red] (2.232,0.866) -- (1.732,0);
        \filldraw (0.5,1.866) circle (0.07);
        \filldraw (2.232,0.866) circle (0.07);
        \filldraw (0,1) circle (0.07);
        \filldraw (0,-1) circle (0.07);
        \filldraw (-1.732,0) circle (0.07);
        \filldraw (1.732,0) circle (0.07);
        \filldraw (0.577,0) circle (0.07);
        \end{tikzpicture}
        &
        \begin{tikzpicture}[scale=0.8]
        \draw[thick] (0,1) -- (0,-1);
        \draw[thick] (0,1) -- (-1.732,0);
        \draw[thick] (0,1) -- (1.732,0);
        \draw[thick] (0,-1) -- (-1.732,0);
        \draw[thick] (0,-1) -- (1.732,0);
        \draw[thick] (0,1) -- (0.577,0);
        \draw[thick] (0,-1) -- (0.577,0);
        \draw[thick] (1.732,0) -- (0.577,0);
        \draw[thick, red] (0,2) -- (1.732,0);
        \draw[thick, red] (0,2) -- (-1.732,0);
        \filldraw (0,2) circle (0.07);
        \filldraw (0,1) circle (0.07);
        \filldraw (0,-1) circle (0.07);
        \filldraw (-1.732,0) circle (0.07);
        \filldraw (1.732,0) circle (0.07);
        \filldraw (0.577,0) circle (0.07);
        \end{tikzpicture}
        & 
        \begin{tikzpicture}[scale=0.8]
        \draw[thick] (0,1) -- (0,-1);
        \draw[thick] (0,1) -- (-1.732,0);
        \draw[thick] (0,1) -- (1.732,0);
        \draw[thick] (0,-1) -- (-1.732,0);
        \draw[thick] (0,-1) -- (1.732,0);
        \draw[thick] (0,1) -- (0.577,0);
        \draw[thick] (0,-1) -- (0.577,0);
        \draw[thick] (1.732,0) -- (0.577,0);
        \draw[thick, red] (0,1) -- (-0.5,1.866);
        \draw[thick, red] (-2.232,0.866) -- (-1.732,0);
        \draw[thick, red] (-2.232,0.866) -- (-0.5,1.866);  
        \filldraw (-0.5,1.866) circle (0.07);
        \filldraw (-2.232,0.866) circle (0.07);
        \filldraw (0,1) circle (0.07);
        \filldraw (0,-1) circle (0.07);
        \filldraw (-1.732,0) circle (0.07);
        \filldraw (1.732,0) circle (0.07);
        \filldraw (0.577,0) circle (0.07);
        \end{tikzpicture}
        \end{tabular}
    \end{center}
    These three cases are not $\Theta_6^2$-free.
    % sharing a single edge and sharing two edges $x_1x_4, x_1x_2$ or $x_3x_4, x_3x_2$. In both of these cases, we will form a $\Theta_6^2$.
\end{proof}

\begin{lemma} \label{lemmaofno5cycle}
Suppose exactly two boundaries of $B_{5,c}$ are adjacent to a $4$-face as in Lemma \ref{lemmaoffourfacesofspecialblockintheta62}. 
Then the other two boundaries of $B_{5,c}$ cannot be adjacent to $5$-faces.
\end{lemma}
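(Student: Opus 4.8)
The plan is a proof by contradiction. By Lemma~\ref{lemmaoffourfacesofspecialblockintheta62} a $4$-face adjacent to $B_{5,c}$ contains exactly two of its boundary edges, namely either the pair meeting at $x_2$ or the pair meeting at $x_4$, so the hypothesis means precisely one $4$-face is adjacent to $B_{5,c}$. I treat the case where this $4$-face is $x_1x_2x_3w$ (the pair at $x_2$); the other case is handled by the same scheme, with one change of routing noted at the end. Suppose, for contradiction, that the two remaining boundaries $x_1x_4$ and $x_3x_4$ are \emph{both} adjacent, on the outer side of $B_{5,c}$, to $5$-faces; I will exhibit a copy of $\Theta_6^2$. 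First I record the local rigidity this forces. Since $x_5$ lies on no boundary edge of $B_{5,c}$, every face of $G$ at $x_5$ is an interior $3$-face of $B_{5,c}$, so $\deg_G(x_5)=3$ with $N(x_5)=\{x_1,x_2,x_3\}$. Since $x_1x_2x_3w$ is incident to $x_2$ along both $x_1x_2$ and $x_2x_3$, the faces of $G$ at $x_2$ are exactly $x_1x_5x_2$, $x_3x_5x_2$ and $x_1x_2x_3w$, so $\deg_G(x_2)=3$ with $N(x_2)=\{x_1,x_3,x_5\}$. Finally, each of $x_1x_3,x_1x_5,x_3x_5,x_1x_2,x_2x_3$ is incident only to interior $3$-faces of $B_{5,c}$ and (for $x_1x_2,x_2x_3$) to $x_1x_2x_3w$, hence lies on no $5$-face. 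Let $F_1$ be the face incident to $x_1x_4$ other than the interior $3$-face $x_1x_4x_3$, and $F_2$ the face incident to $x_3x_4$ other than $x_1x_4x_3$; both are $5$-cycles by assumption. Recall that a $6$-cycle together with a chord joining two of its vertices at distance $2$ along it is a copy of $\Theta_6^2$.

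\textit{Case $F_1=F_2$.} This common $5$-cycle contains the edges $x_4x_1,x_4x_3$ and hence equals $x_1-x_4-x_3-b-c-x_1$ for two further vertices $b,c$. Since $x_3x_5,x_2x_3,x_1x_5,x_1x_2$ lie on no $5$-face, neither of $b,c$ is $x_5$ or $x_2$; combined with $b,c\notin\{x_1,x_3,x_4\}$, the vertices $x_1,c,b,x_3,x_5,x_2$ are pairwise distinct. They span the $6$-cycle $x_1-c-b-x_3-x_5-x_2-x_1$ with the chord $x_1x_5$ at cyclic distance $2$, a $\Theta_6^2$ — contradiction.

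\textit{Case $F_1\neq F_2$.} At $x_4$ the $3$-face $x_1x_4x_3$ lies between the edges $x_4x_1,x_4x_3$, with $F_1,F_2$ on their outer sides; as $F_1\neq F_2$ we get $\deg_G(x_4)\geq 3$, and $F_1$ contains an edge $x_4y$ with $y\notin\{x_1,x_3\}$ (if $y=x_3$ then $x_3x_4\in E(F_1)$ and $F_1=F_2$). Write $F_1=x_1-x_4-y-a-b-x_1$. Then $b\neq x_3$, for otherwise $x_1x_3=x_1b\in E(F_1)$, impossible since $x_1x_3$ lies only on the $3$-faces $x_1x_4x_3,x_1x_3x_5$. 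If $a\neq x_3$, then $x_1,x_4,b,a,y,x_3$ are pairwise distinct and the three $x_1$--$x_4$ paths $x_1x_4$, $x_1-x_3-x_4$, $x_1-b-a-y-x_4$ (lengths $1,2,4$) are internally disjoint, forming a $\Theta_6^2$. If $a=x_3$, then $yx_3\in E(F_1)$, and the $6$-cycle $x_2-x_5-x_1-x_4-y-x_3-x_2$ — whose edges all lie in $B_{5,c}$ or $F_1$, and whose vertices are distinct since $y\sim x_4$ while $x_2\not\sim x_4$ and $x_5\not\sim x_4$ — carries the chord $x_1x_2$ at cyclic distance $2$, again a $\Theta_6^2$.

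Every case produces a $\Theta_6^2$, contradicting $\Theta_6^2$-freeness; this proves the lemma. I expect the actual work to be the distinctness bookkeeping rather than the branching: one must check that the six vertices of each exhibited theta-graph are genuinely distinct, which is precisely what the degree computations and the "which edge lies on which face" observations are for, and the delicate point is the sub-case $a=x_3$, where the natural $6$-cycle through $x_4$ collapses and one reroutes through $x_2,x_5$. Two minor technical points: when the adjacent $4$-face is $x_1x_4x_3v$ instead, the same argument works but the degenerate $6$-cycle is taken as $x_5-x_1-b-x_3-y-x_2-x_5$ with chord $x_1x_2$, since $x_5x_4\notin E(G)$; and I treat the boundaries of $F_1,F_2$ as simple cycles, which is automatic when $G$ is $2$-connected and otherwise costs a single extra remark, since a block-adjacent face bounded by a non-simple closed walk still supplies the cycle used.
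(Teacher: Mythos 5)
Your argument is in substance the same as the paper's: you split on whether the $5$-face at $x_1x_4$ also contains $x_3x_4$ (your case $F_1=F_2$ versus $F_1\neq F_2$, which matches the paper's Case 2 versus Case 1), and in each branch you exhibit the same copies of $\Theta_6^2$ built from the $5$-face together with edges of $B_{5,c}$ (the paper chooses the chord $x_4x_3$ where you choose $x_1x_2$ in the sub-case $a=x_3$, but the underlying $6$-cycle is identical). Your degree computations for $x_2$ and $x_5$ make the distinctness checks explicit, which the paper leaves implicit; that is a genuine improvement in rigor, not a different method.

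One point needs fixing. You open the contradiction with ``suppose $x_1x_4$ and $x_3x_4$ are \emph{both} adjacent to $5$-faces,'' which, read literally, only establishes that the two remaining boundaries cannot \emph{simultaneously} be adjacent to $5$-faces. The lemma is used in Case 3 of the main argument to conclude that \emph{each} of the two boundaries is adjacent to a face of length at least $6$, so the intended (and needed) statement is that neither boundary can lie on a $5$-face; the paper accordingly assumes only that $x_1x_4$ is adjacent to a $5$-face and invokes symmetry for $x_3x_4$. Your proof is trivially repaired, because nowhere in either of your cases do you actually use that $F_2$ is a $5$-face — in the case $F_1=F_2$ this is automatic from $F_1$ being one, and in the case $F_1\neq F_2$ you argue entirely inside $F_1$ — so you should simply restate the hypothesis as ``$x_1x_4$ is adjacent to a $5$-face'' and let $F_2$ denote whatever face lies outside $x_3x_4$. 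Separately, your closing remark that a non-simple boundary walk ``still supplies the cycle used'' is too quick: a length-$5$ walk that is not a simple cycle is a triangle with a doubly-traversed pendant edge, and the cycle it supplies is a $3$-cycle $x_1x_4y$, not a $5$-cycle, so it needs its own (easy) contradiction, which the paper supplies via Lemma \ref{poss_tri_block}.
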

\begin{proof}
Let a $4$-face join $B_{5,c}$ as in the figure below.
\begin{center}
\begin{tikzpicture}
\draw[thick] (0,1) -- (0,-1);
\draw[thick] (0,1) -- (-1.732,0);
\draw[thick] (0,1) -- (1.732,0);
\draw[thick] (0,-1) -- (-1.732,0);
\draw[thick] (0,-1) -- (1.732,0);
\draw[thick] (0,1) -- (0.577,0);
\draw[thick] (0,-1) -- (0.577,0);
\draw[thick] (1.732,0) -- (0.577,0);
\draw[thick, red] (0,1) -- (3,0);
\draw[thick, red] (0,-1) -- (3,0);
\filldraw (3,0) circle (0.07);
\node[above] at (0,1) {$x_1$};
\node[right] at (1.732,0) {$x_2$};
\node[below] at (0,-1) {$x_3$};
\node[left] at (-1.732,0) {$x_4$};
\node[below] at (0.7,0) {$x_5$};
\node[right] at (3,0) {$w$};
\filldraw (0,1) circle (0.07);
\filldraw (0,-1) circle (0.07);
\filldraw (-1.732,0) circle (0.07);
\filldraw (1.732,0) circle (0.07);
\filldraw (0.577,0) circle (0.07);
\end{tikzpicture}
\end{center}
For the sake of contradiction, suppose that $x_1x_4$ is adjacent to a $5$-face on the outside. 
We know that in a planar graph, there are only two possible $5$-faces: a face with $5$ vertices (namely, a pentagon) or with $4$ vertices (a triangle with one of its vertices connected to an extra edge). 
The second case is impossible, otherwise there exists an additional vertex $y\not\in V(B_{5,c})$ such that $x_1x_4y$ forms a $3$-cycle, and the resultant graph is not $\Theta_6^2$-free by Lemma \ref{poss_tri_block}.
Hence, the $5$-face adjacent to $x_1x_4$ is a pentagon.

We are therefore left with two cases. 

\textbf{Case 1:} Suppose the $5$-face does not join $x_3x_4$.
Then there are three vertices $a$, $b$, and $c$ such that the $5$-face can be written as $x_1x_4abc$, where $a,c \neq x_3$. 
If $b \neq x_3$, then $x_1x_3x_4 \cup x_1x_4abc$ forms a $\Theta_6^2$;
if $b = x_3$, then $x_3ax_4 \cup x_1x_4x_3x_2x_5$ forms a $\Theta_6^2$.
There is always a contradiction, and this case is impossible. 

\begin{center}
\begin{tabular}{c c}
\begin{tikzpicture}[scale=0.8]
\draw[thick] (0,1) -- (0,-1);
\draw[thick] (0,1) -- (-1.732,0);
\draw[thick] (0,1) -- (1.732,0);
\draw[thick] (0,-1) -- (-1.732,0);
\draw[thick] (0,-1) -- (1.732,0);
\draw[thick] (0,1) -- (0.577,0);
\draw[thick] (0,-1) -- (0.577,0);
\draw[thick] (1.732,0) -- (0.577,0);
\draw[thick, red] (0,1) -- (3,0);
\draw[thick, red] (0,-1) -- (3,0);
\draw[thick, blue] (-1.732,0) -- (-2.5,1);
\draw[thick, blue] (-1.7,2) -- (-2.5,1);
\draw[thick, blue] (-1.7,2) -- (-0.3,2);
\draw[thick, blue] (0,1) -- (-0.3,2);
\filldraw (-2.5,1) circle (0.07);
\filldraw (-0.3,2) circle (0.07);
\filldraw (-1.7,2) circle (0.07);
\filldraw (3,0) circle (0.07);
\filldraw (0,1) circle (0.07);
\filldraw (0,-1) circle (0.07);
\filldraw (-1.732,0) circle (0.07);
\filldraw (1.732,0) circle (0.07);
\filldraw (0.577,0) circle (0.07);
\node[above] at (0.3,1) {$x_1$};
\node[right] at (1.732,0) {$x_2$};
\node[below] at (0,-1) {$x_3$};
\node[left] at (-1.732,0) {$x_4$};
\node[below] at (0.75,0) {$x_5$};
\node[right] at (3,0) {$w$};
\node[above] at (-2.5,1) {$a$};
\node[above] at (-1.7,2) {$b$};
\node[above] at (-0.3,2) {$c$};
\end{tikzpicture}
&
\begin{tikzpicture}[scale=0.8]
\draw[thick] (0,1) -- (0,-1);
\draw[thick] (0,1) -- (-1.732,0);
\draw[thick] (0,1) -- (1.732,0);
\draw[thick] (0,-1) -- (-1.732,0);
\draw[thick] (0,-1) -- (1.732,0);
\draw[thick] (0,1) -- (0.577,0);
\draw[thick] (0,-1) -- (0.577,0);
\draw[thick] (1.732,0) -- (0.577,0);
\draw[thick, red] (0,1) -- (3,0);
\draw[thick, red] (0,-1) -- (3,0);
\draw[thick, blue] (0,-1) -- (-1.13,-0.96);
\draw[thick, blue] (-1.732,0) -- (-1.13,-0.96);
\draw[thick,blue] (-3,0) .. controls (-2,0.8) and (-1,1.7) .. (0,1);
\draw[thick,blue] (-3,0) .. controls (-2,-0.8) and (-1,-1.7) .. (0,-1);
\filldraw (-1.13,-0.96) circle (0.07);
\filldraw (-3,0) circle (0.07);
\filldraw (3,0) circle (0.07);
\filldraw (0,1) circle (0.07);
\filldraw (0,-1) circle (0.07);
\filldraw (-1.732,0) circle (0.07);
\filldraw (1.732,0) circle (0.07);
\filldraw (0.577,0) circle (0.07);
\node[above] at (0,1) {$x_1$};
\node[right] at (1.732,0) {$x_2$};
\node[below] at (0.3,-1) {$b=x_3$};
\node[left] at (-1.732,0) {$x_4$};
\node[below] at (0.75,0) {$x_5$};
\node[right] at (3,0) {$w$};
\node[above] at (-1,-0.96) {$a$};
\node[left] at (-3,0) {$c$};
\end{tikzpicture}
\end{tabular}
\end{center}

\textbf{Case 2:} Suppose the $5$-face joins $x_3x_4$.
Then there are two vertices $a$ and $b$ such that the $5$-face can be written as $x_1x_4x_3ab$. 
However, $x_1x_2x_5 \cup x_1x_5x_3ab$ forms a $\Theta_6^2$, which is a contradiction. 
\begin{center}
\begin{tikzpicture}[scale=0.8]
\draw[thick] (0,1) -- (0,-1);
\draw[thick] (0,1) -- (-1.732,0);
\draw[thick] (0,1) -- (1.732,0);
\draw[thick] (0,-1) -- (-1.732,0);
\draw[thick] (0,-1) -- (1.732,0);
\draw[thick] (0,1) -- (0.577,0);
\draw[thick] (0,-1) -- (0.577,0);
\draw[thick] (1.732,0) -- (0.577,0);
\draw[thick, red] (0,1) -- (3,0);
\draw[thick, red] (0,-1) -- (3,0);
\draw[thick, blue] (0,-1) -- (-3,-0.7);
\draw[thick, blue] (-3,0.7) -- (-3,-0.7);
\draw[thick, blue] (-3,0.7) -- (0,1);
\filldraw (-3,0.7) circle (0.07);
\filldraw (-3,-0.7) circle (0.07);
\filldraw (3,0) circle (0.07);
\filldraw (0,1) circle (0.07);
\filldraw (0,-1) circle (0.07);
\filldraw (-1.732,0) circle (0.07);
\filldraw (1.732,0) circle (0.07);
\filldraw (0.577,0) circle (0.07);
\node[above] at (0,1) {$x_1$};
\node[right] at (1.732,0) {$x_2$};
\node[below] at (0,-1) {$x_3$};
\node[left] at (-1.732,0) {$x_4$};
\node[below] at (0.75,0) {$x_5$};
\node[right] at (3,0) {$w$};
\node[left] at (-3,-0.7) {$a$};
\node[left] at (-3,0.7) {$b$};
\end{tikzpicture}
\end{center}
Therefore, $x_1x_4$ cannot be adjacent to a $5$-face. 
By the symmetric argument, $x_3x_4$ cannot be adjacent to a $5$-face either. 
The justification for the second picture of Lemma \ref{lemmaoffourfacesofspecialblockintheta62} adopts a similar idea. 
\end{proof}

%We have the following possible cases of a 5-face adjacent to a boundary of $B_{5,c}$: sharing a single edge, sharing two edges, sharing three vertices and one edge where the edge is $x_4x_1$ or $x_4x_3$, sharing three vertices and one edge that is $x_2x_1$ or $x_2x_3$, and sharing three edges. In all of these cases, the graph is not $\Theta_6^2$-free.

Now we will discuss the contribution of $B_{5,c}$ blocks depending on its adjacent faces.

\textbf{Case 1:} Suppose no boundary of $B_{5,c}$ is adjacent to $4$-faces.
In other words, all boundaries of $B_{5,c}$ are adjacent to faces of length at least $5$. 
In this case,
$f_{B_{5,c}}\leq 4+\frac{1}{5}\cdot 4$, $e_{B_{5,c}}=8$, and $g(B)\leq -\frac{8}{5}$.

\textbf{Case 2:} Suppose one or three of the boundaries of $B_{5,c}$ are adjacent to $4$-faces.
By Lemma \ref{lemmaoffourfacesofspecialblockintheta62}, this case is impossible.

\textbf{Case 3:} Suppose exactly two boundaries of $B_{5,c}$ are adjacent to $4$-faces.
By Lemma \ref{lemmaoffourfacesofspecialblockintheta62}, it is either a $4$-face adjacent to $x_1x_2$ and $x_2x_3$ or a $4$-face adjacent to $x_1x_4$ and $x_3x_4$.
By Lemma \ref{lemmaofno5cycle}, both boundaries that are not adjacent to 4-faces must be adjacent to faces of length at least $6$.
In this case, $f_{B_{5,c}}\leq 4+\frac{1}{4}\cdot 2+\frac{1}{6}\cdot 2$, $e_{B_{5,c}}=8$, and $g(B_{5,c})\leq -1$.

\textbf{Case 4:} Suppose all boundaries of $B_{5,c}$ are adjacent to a $4$-face.
Same as the Case $2$ in Subsection \ref{troublemaker_Theta_61}, we can show that the edges sharing common $4$-faces with this $B_{5,c}$ block must induce trivial triangular blocks. 
Then we can define a cluster $\overline{B}$ in the same way, inducing a well-defined partition of $\mathcal{B}$. For this cluster $\overline{B}$, we have $f_{\overline{B}}\leq 6+\frac{1}{4}\cdot 4$, $e_{\overline{B}}=12$, and $g(\overline{B})\leq-6$.

Since every cluster has contribution at most zero, by Lemma \ref{tri-b}, we prove the upper bound $ex_{\mathcal{P}}(n,\Theta_6^2)\leq\frac{18}{7}(n-2)$.

\section{Acknowledgement}
The research is done as a part of Budapest Semesters in Mathematics, Spring 2024.
Research of Gy\H{o}ri was supported by the National Research, Development and Innovation Office - NKFIH under the grant K 132696. 

\end{document}